\newfont{\bb}{msbm10 at 11pt}
\newfont{\bbsmall}{msbm8 at 8pt}
\newtheorem{teorema}{Theorem}
\newtheorem{lema}{Lemma}
\newtheorem{corolario}{Corollary}
\newtheorem{observacion}{Remark}
\newtheorem{nota}{Remark}
\newtheorem{claim}{Claim}
\def\z{\hbox{\bb Z}}
\newcommand{\ep}{\varepsilon}
\def\cA{\mathcal{A}}
\def\cK{\mathcal{K}}
\def\cL{\mathcal{L}}
\def\cU{\mathcal{U}}
\def\cM{\mathcal{M}}
\def\cH{\mathcal{H}}
\let\hat=\widehat
\let\tilde=\widetilde
\let\landa=\lambda
\let\alfa=\alpha
\let\parc=\partial
\let\ep=\varepsilon
\def\landa{\lambda}
\def\flecha{\rightarrow}
\def\esiz{\langle}
\def\esde{\rangle}
\def\cte.{\mathop{\rm cte.}\nolimits}
\def\cosh{\mathop{\rm cosh }\nolimits}
\def\A{\mathbb{A}}
\def\Z{\mathbb{Z}}
\def\R{\mathbb{R}}
\def\C{\mathbb{C}}
\def\D{\mathbb{D}}
\def\S{\mathbb{S}}
\newcommand{\beq}{\begin{equation}}
\newcommand{\eeq}{\end{equation}}
\numberwithin{equation}{section}
\begin{document}

\begin{title}[Isolated singularities of elliptic Monge-Ampère
equations]{A classification of isolated singularities of elliptic
Monge-Ampère equations in dimension two}
\end{title}
\today
\author{José A. Gálvez}
\address{José A. Gálvez, Departamento de Geometría y Topología,
Universidad de Granada, 18071 Granada, Spain}
 \email{jagalvez@ugr.es}

\author{Asun Jiménez}
\address{Asun Jiménez, Departamento de Geometria, IME,
Universidade Federal Fluminense, 24.020-140  Niterói, Brazil}
 \email{asunjg@vm.uff.br}

\author{Pablo Mira}
\address{Pablo Mira, Departamento de Matemática Aplicada y Estadística, Universidad Politécnica de
Cartagena, 30203 Cartagena, Murcia, Spain.}

\email{pablo.mira@upct.es}

\thanks{The authors were partially supported by
MICINN-FEDER, Grant No. MTM2010- 19821, Junta de Andalucía Grant No.
FQM325, the Programme in Support of Excellence Groups of Murcia, by
Fundación Séneca, R.A.S.T 2007-2010, reference 04540/GERM/06 and
Junta de Andalucía, reference P06-FQM-01642.}

\subjclass{35J96, 53C42}


\keywords{Monge-Ampère equation, isolated singularity, prescribed
curvature, boundary regularity.}

\begin{abstract}
Let $\cM_1$ denote the space of solutions $z(x,y)$ to an elliptic,
real analytic Monge-Ampère equation ${\rm det} (D^2 z
)=\varphi(x,y,z,Dz)>0$ whose graphs have a non-removable isolated
singularity at the origin. We prove that $\cM_1$ is in one-to-one
correspondence with $\cM_2\times \Z_2$, where $\cM_2$ is a suitable
subset of the class of regular, real analytic strictly convex Jordan
curves in $\R^2$. We also describe the asymptotic behavior of
solutions of the Monge-Ampère equation in the $C^k$-smooth case, and
a general existence theorem for isolated singularities of analytic
solutions of the more general equation ${\rm det} (D^2 z
+\mathcal{A}(x,y,z,Dz))=\varphi(x,y,z,Dz)>0$.
\end{abstract}

\maketitle

\section{Introduction}

In 1955 K. Jörgens wrote a seminal paper \cite{Jor} which initiated
the study of isolated singularities of the the classical elliptic
Monge-Ampère equation in dimension two,

 \begin{equation}\label{purema}
{\rm det} (D^2 z)=\varphi (x,y,z,Dz)>0,
 \end{equation}
where $D,D^2$ denote the gradient and Hessian operators. Jörgens
proved for $\varphi=1$ a removable singularity theorem, and gave a
description of the behavior of a solution to ${\rm det} (D^2 z)=1$
around a non-removable isolated singularity.

In this paper we classify the isolated singularities of
\eqref{purema} in the case that $\varphi$ is real analytic, and give
a complete description of the asymptotic behavior of such solutions
around an isolated singularity when $\varphi$ is only of class
$C^k$. Specifically, we give this classification by explicitly
parametrizing the moduli space of solutions to \eqref{purema} with a
non-removable isolated singularity at some given point, as we
explain next.

By convexity, any solution $z$ to \eqref{purema} defined on a
punctured disk extends continuously to the puncture. If the
extension is not $C^2$, we say that the puncture is an
\emph{isolated singularity}. The gradient of $z$ converges at the
singularity to a point, a segment, or a closed convex curve. We call
this set the \emph{limit gradient} of $z$ at the singularity. The
limit gradient $\gamma\subset \R^2$ describes the asymptotic
behavior of $z(x,y)$ as $(x,y)$ converges to the puncture.

Without loss of generality, we will assume that the singularity is
placed at $(0,0,0)$. We let $\varphi>0$ be defined on an open set
$\cU\subset \R^5$ such that $\cH:=\{(p,q)\in \R^2 : (0,0,0,p,q)\in
\cU\}\neq \emptyset$. Note that if $z$ is a solution to
\eqref{purema} with an isolated singularity at the origin, by
continuity the limit gradient of $z$ at the origin is contained in
$\overline{\cH}\subset \R^2$. As a matter of fact, $\gamma$ is
contained in $\cH$ in many natural situations; for instance if
$\cH=\R^2$ or, more generally, if $\cU\subset \R^3\times \cH$ where
$\cH\subset \R^2$ is simply connected (see Remark \ref{remhdef}).

With these conventions, we prove:

\begin{teorema}\label{main}
Let $\varphi\in C^{\omega}(\cU)$, $\varphi>0$. Let $\cM_1$ denote
the class of solutions $z$ to \eqref{purema} that have an isolated
singularity at the origin, and whose limit gradient at the
singularity is contained in $\cH\subset \R^2$. Let $\cM_2$ denote
the class of regular, analytic, strictly convex Jordan curves
$\gamma$ in $\cH\subset \R^2$.

Then, the map sending each $z\in \cM_1$ to $(\gamma,\ep)$, where
$\gamma$ is its limit gradient at the singularity and $\ep\in \Z_2$
is $0$ (resp. $1$) if $z_{xx}>0$ (resp. $z_{xx}<0$) defines a
bijective correspondence between $\cM_1$ and $\cM_2\times \Z_2$.
\end{teorema}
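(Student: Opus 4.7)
The plan is to prove bijectivity of $\Phi:z\mapsto(\gamma,\varepsilon)$ in three stages---well-definedness, injectivity, surjectivity---using the Legendre transform as the central bridge that converts the isolated singularity of $z$ at the origin into a boundary value problem on the region adjacent to $\gamma$, with Cauchy-Kovalevskaya exploiting the real-analytic setting.

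\textbf{Well-definedness.} Given $z\in\cM_1$: up to the sign $\varepsilon$, $z$ is convex, so by standard convex analysis the limit gradient $\gamma$ is a closed convex set---a point, a non-degenerate segment, or the boundary of a convex body. The first two cases correspond to removable singularities (via an extension to analytic $\varphi$ of J\"orgens' removable singularity theorem for $\varphi=1$), so non-removability forces $\gamma$ to bound a convex body; strict convexity of $\gamma$ follows from ellipticity (a flat arc of $\gamma$ would degenerate the Hessian), and real-analyticity and regularity of $\gamma$ come from the analytic character of the equation together with boundary regularity for the dual problem below. The sign $\varepsilon$ is well-defined because ellipticity with $\det D^2z>0$ forces both eigenvalues of $D^2z$ to share a sign. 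Finally, $Dz$ is a diffeomorphism from a punctured neighborhood of the origin onto a one-sided analytic neighborhood of $\gamma$ in $\cH$, and the Legendre transform $w(p,q)=xp+yq-z(x,y)$ (with $(p,q)=Dz(x,y)$) satisfies the dual analytic Monge-Amp\`ere equation $\det D^2w=1/\varphi(Dw,\,pw_p+qw_q-w,\,p,q)$ together with the Cauchy data $w=0$ and $Dw=0$ on $\gamma$.

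\textbf{Injectivity and reduction of surjectivity.} If $z_1,z_2\in\cM_1$ have $\Phi(z_1)=\Phi(z_2)=(\gamma,\varepsilon)$, the Legendre transforms $w_1,w_2$ satisfy the same analytic dual equation on the same side of $\gamma$ with identical Cauchy data on the analytic curve $\gamma$; once the degeneracy at $\gamma$ is resolved (see below), Cauchy-Kovalevskaya forces $w_1\equiv w_2$ and hence $z_1\equiv z_2$. For surjectivity one inverts this procedure: given $(\gamma,\varepsilon)$, solve the dual Cauchy problem for $w$ and recover $z$ from $w$ by inverse Legendre transform.

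\textbf{Main obstacle.} The dual Cauchy problem is genuinely degenerate: if $w$ were $C^2$ up to $\gamma$, the conditions $w=Dw=0$ along $\gamma$ would force the tangential components of $D^2w$ to vanish, giving $\det D^2w|_\gamma=0$ and contradicting $\det D^2w=1/\varphi>0$. Thus $w$ must have cusp-type behavior along $\gamma$, and this is the main obstacle. My strategy is to seek $w$ via an ansatz $w(p,q)=\rho(p,q)^a\,h(p,q)$, where $\rho$ is an analytic defining function of $\gamma$, $a>1$ is a specific non-integer exponent dictated by matching leading orders of the dual equation at $\rho=0$, and $h$ is analytic up to $\gamma$ with $h|_\gamma\neq 0$. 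Substituting into the dual equation and dividing out the appropriate power of $\rho$ should yield a regular analytic equation for $h$ that is non-characteristic across $\gamma$, with the non-characteristic condition stemming precisely from the strict convexity of $\gamma$; Cauchy-Kovalevskaya then produces the unique analytic $h$, whose boundary data are read off from $\gamma$ and whose sign branch is selected by $\varepsilon$. Running the inverse Legendre transform on $w=\rho^a h$ yields the desired solution $z\in\cM_1$ with $\Phi(z)=(\gamma,\varepsilon)$, completing the bijection.
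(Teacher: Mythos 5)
Your strategy---pass to the Legendre dual $w(p,q)$, reduce to a Cauchy problem along $\gamma$ with data $w=0$, $Dw=0$, and invert---is genuinely different from the paper's, which instead introduces conformal coordinates $(u,v)$ for the Monge--Amp\`ere metric $z_{xx}dx^2+2z_{xy}dxdy+z_{yy}dy^2$, so that the singularity becomes a boundary circle $\{v=0\}$ of an annulus and $(x,y,z,p,q)(u,v)$ satisfies the non-degenerate first-order system \eqref{sfacil2p} whose Cauchy problem along $\{v=0\}$ is solvable by Cauchy--Kowalevsky directly. Your approach has two serious gaps.

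First, the ``well-definedness'' paragraph asserts but does not prove the key facts. That $\gamma$ being a point or segment forces removability is a nontrivial theorem (the paper relies on the Heinz--Beyerstedt framework, reducing matters to \cite[Lemma~3.3]{HeB} to get the annulus); ``an extension of J\"orgens'' is not available off the shelf for general analytic $\varphi$. That $\gamma$ is \emph{regular} and \emph{strictly} convex and \emph{analytic} is in fact the hardest part of the paper's Theorem~\ref{mainth22}: it uses Heinz's boundary regularity theorem \cite{He} plus a bootstrap, M\"uller's analyticity theorem \cite{Mu} for the mixed boundary value problem, and a Hartman--Wintner nodal-curve argument (Claims~\ref{gammareg} and~\ref{gammareg2}) to exclude both cusps and inflections of $\gamma$. ``A flat arc would degenerate the Hessian'' is a heuristic, not an argument; the degeneracy would occur only in a limit, and one needs a genuine obstruction.

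Second, and more fundamentally, you correctly diagnose that the dual Cauchy problem is characteristically degenerate along $\gamma$ (since $w=Dw=0$ on $\gamma$ forces the tangential second derivative and hence $\det D^2w$ to vanish there), but you do not actually resolve the degeneracy. Working out your ansatz $w=\rho^a h$ in Fermi coordinates $(s,n)$ along $\gamma$, the leading order of $\det D^2w$ is $-\kappa a^2(a-1)\,n^{2a-3}h^2$, which fixes $a=3/2$. With $a=3/2$, derivatives of $w$ carry half-integer powers of $n$, so after substitution the equation for $h$ is analytic in $(s,\sqrt n)$ rather than in $(s,n)$, and in the resolved variable $m=\sqrt n$ it acquires a $1/m$ (Fuchsian) singularity at $m=0$. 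Standard Cauchy--Kowalevsky does \emph{not} apply to such a regular singular problem; one would need a theorem of Baouendi--Goulaouic type, with its own indicial analysis and non-resonance conditions, and then one must separately verify that \emph{every} $z\in\cM_1$ produces a $w$ of precisely this $\rho^{3/2}$ form (otherwise uniqueness through the ansatz proves nothing). None of this is carried out. The paper's change of variables is itself the desingularization---roughly $v\sim\sqrt n$---and it converts the problem into a \emph{non-characteristic} first-order analytic system to which classical Cauchy--Kowalevsky applies with no further ado; your route trades this away and leaves the analytic heart of the theorem open.
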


Theorem \ref{main} is a consequence of two more general results that
we obtain.

In Theorem \ref{mainth22} we prove that if $\varphi \in C^k(\cU)$,
$k\geq 4$, the limit gradient of any solution to \eqref{purema} with
an isolated singularity at the origin is a $C^{k,\alfa}$-smooth
regular, strictly convex (i.e. of nowhere-zero curvature) Jordan
curve $\gamma$ in $\R^2$. If $\varphi$ is analytic, we show that
$\gamma$ is also analytic. We also provide a way of parameterizing
the graph of any such solution so that the resulting map is defined
on an annulus and extends smoothly (or analytically) across the
boundary circle which the parametrization collapses to the
singularity. This result provides a complete description of the
asymptotic behavior of a solution to \eqref{purema} with $\varphi\in
C^k (\cU)$, $k\geq 4$, at an isolated singularity.

In Theorem \ref{mainth11} we give a general existence theorem for
isolated singularities of the \emph{general elliptic equation of
Monge-Ampère type} in dimesion two,
 \begin{equation}\label{eq0}
{\rm det }( D^2 z + \mathcal{A}(x,y,z,Dz)) =\varphi(x,y,z,D z)>0.
 \end{equation}
We prove that if both $\varphi$ and the symmetric matrix $\cA$ are
analytic in $\cU\subset \R^5$, any regular, analytic, strictly
convex Jordan curve $\gamma\in \cH$ can be realized as the limit
gradient of a solution to \eqref{eq0} which has an isolated
singularity at the origin. Even more generally, if in our
construction process we start with a closed, analytic curve in $\cH$
(not necessarily convex or regular), we obtain a \emph{multivalued
solution} to \eqref{eq0} with a singularity at the origin; see also
\cite{CaLi} for a study of multivalued solutions of Monge-Ampère
equations.

Theorem \ref{main} is not true for the more general equation
\eqref{eq0}; see Remark \ref{remaruno}.

The study of isolated singularities is a fundamental problem in the
theory of nonlinear geometric PDEs, and has been extensively
studied. Several of such elliptic equations (including the minimal
surface equation \cite{Ber}) only admit removable isolated
singularities; see \cite{LeRo} and references therein. The
asymptotic behavior at an isolated singularity of solutions to fully
nonlinear conformally invariant geometric PDEs has been studied in
detail in many works, see for instance
\cite{CHY,Gon,GuVi,HLT,Li,LiNg} and references therein (see also
\cite{CGS,KMPS,MaPa}). Some previous works on isolated singularities
of elliptic Monge-Ampère equations can be found in
\cite{ACG,Bey1,Bey2,GaMi,GHM,GMM,HeB,JiXi,Jor,ScWa}.

The apparition of solutions to \eqref{purema} with non-removable
isolated singularities is a very natural phenomenon, see Figure 1.
This justifies the interest of the study of the asymptotic behavior
and classification of such isolated singularities beyond a
\emph{removable singularity} type theorem. It should be emphasized
that, typically, solutions to \eqref{purema} with non-removable
isolated singularities do not belong to the usual classes of
generalized solutions to \eqref{purema} (viscosity solutions,
Alexandrov solutions).

\begin{figure}[h]
  \begin{center}
    \begin{tabular}{cc}
    \includegraphics[clip,width=5.5cm]{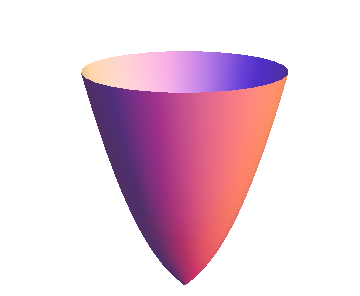} &
    \includegraphics[clip,width=3.3cm]{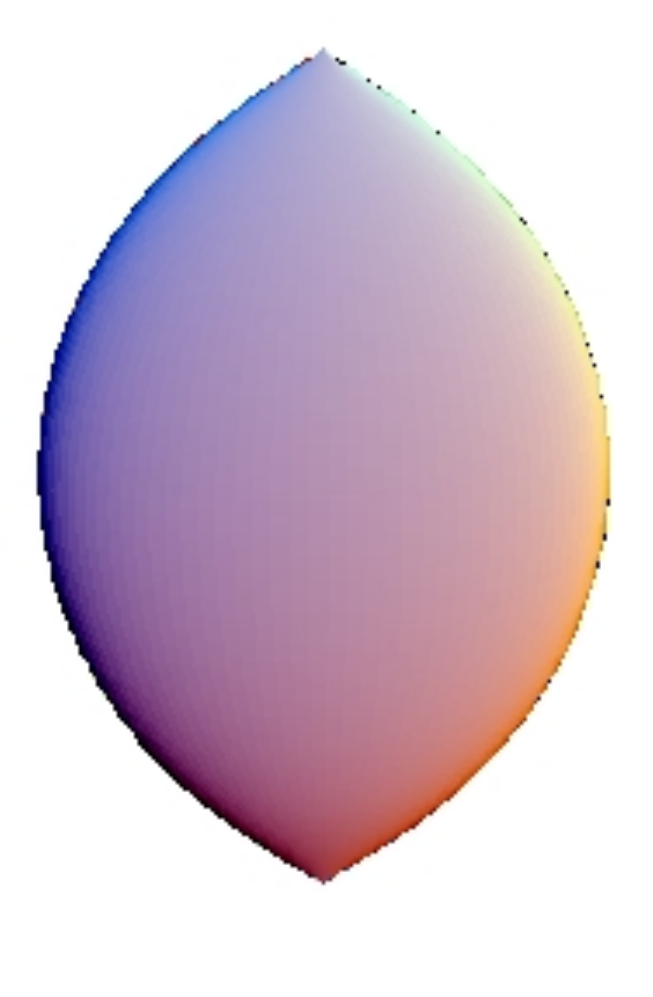}
\end{tabular}
\caption{Left: the radial function $z (x,y)= \frac{1}{2}\left(
 \sqrt{r^2(1+r^2)} + \sinh^{-1} (r)\right)$, $
r=\sqrt{x^2+y^2},$ is the simplest solution to  ${\rm det} (D^2
z)=1$ other than quadratic polynomials. Right: a rotational peaked
sphere in $\R^3$; it is the simplest $K=1$ surface in $\R^3$ other
than round spheres. Both examples present non-removable isolated
singularities.} \end{center}
 \end{figure}

Let us note that some of the arguments that we use here seem
specific of the two-dimensional case, since they rely on complex
analysis and surface theory. Nonetheless, the basic strategy of our
classification --transforming the PDE into a first order
differential system for which the isolated singularity turns into a
regular boundary curve, and then study the Cauchy problem for that
system along the boundary to determine the asymptotic behavior at
the singularity-- seems applicable to other fully nonlinear PDEs
admitting cone singularities, even in arbitrary dimension.

The PDEs \eqref{purema} and \eqref{eq0} appear in a variety of
applications, among which we may quote optimal transport problems,
isometric embedding of abstract Riemannian metrics, surfaces of
prescribed curvature in Riemannian and Lorentzian three-manifolds,
parabolic affine spheres, linear Weingarten surfaces, etc. In this
way, the results of this paper frequently admit reformulations in
these specific theories. For instance, some solutions with cone
singularities that we construct here provide (via the Legendre
transform) solutions to an obstacle problem for Monge-Ampère
equations, as explained in \cite{Sav}.

We shall give in an Appendix, as an example of a geometric
application of this type, a classification of the isolated
singularities of embedded surfaces in $\R^3$ with prescribed
positive Gaussian curvature.

\section{Asymptotic behavior at the singularity}\label{sec: unicidad}

In this section we will use the following conventions:

 \begin{itemize}
   \item
$\Omega=\{(x,y)\in\R^2 : 0<x^2+y^2<\rho^2\} $, a punctured disc
centered at the origin.
 \item
$\cU\subset \R^5$ is an open set such that $$\cH:=\{(p,q)\in \R^2 :
(0,0,0,p,q)\in \cU\}\neq \emptyset.$$
 \item
$\varphi\in C^k (\cU)$, $\varphi>0$, $k\geq 4$.
 \item
$z$ is a solution to \eqref{purema} in $\Omega$ with an isolated
singularity at $(0,0)$, which is of class $C^{k+1,\alfa}$ on compact
sets of $\Omega$. \emph{We will assume without loss of generality
from now on that $z$ has been continuously extended to the origin by
$z(0,0)=0$}.
 \item
$\gamma\subset \R^2$ will denote the \emph{limit gradient} of $z$ at
the origin; that is, $\gamma\subset \R^2$ is the set of points
$\xi\in \R^2$ for which there is a sequence $q_n\to (0,0)$ in
$\Omega$ such that $(z_x,z_y)(q_n)\to \xi$. Note that $\gamma\subset
\overline{\cH}$.
 \item
We will assume without loss of generality that $z_{xx}>0$. Observe
that by \eqref{purema} and since $\Omega$ is connected, either
$z_{xx}>0$ or $z_{xx}<0$ on $\Omega$. If $z$ is a solution to
\eqref{purema} with $z_{xx}<0$ and an isolated singularity at the
origin, then $\tilde{z}(x,y):=-z(-x,-y)$ is a solution to $z_{xx}
z_{yy}-z_{xy}^2=\tilde{\varphi} (x,y,z,z_x,z_y)$, where
$\tilde{\varphi} (x,y,z,p,q)=\varphi(-x,-y,-z,p,q)$, with
$\tilde{z}_{xx}>0$ and an isolated singularity at the origin. Note
that the limit gradients $\gamma,\tilde{\gamma}$ of $z$ and
$\tilde{z}$ at the origin coincide, and that $\cH=\tilde{\cH}$.
 \item
$\gamma\subset \cH$. See the next remark.
 \end{itemize}

\begin{observacion}\label{remhdef}
The condition that $\gamma\subset \cH$ (and not just that
$\gamma\subset \overline{\cH}$, which is always true by continuity)
automatically holds if the domain $\cU\subset \R^5$ where $\varphi$
is defined and positive has a simple geometry. Indeed, observe that
$z\in C^2(\Omega)$ is a locally strictly convex graph, as well as a
continuous convex graph on the convex planar set
$\Omega\cup\{(0,0)\}$. It is then easy to see from the theory of
convex sets that given a Jordan curve $\Gamma\subset\Omega$, then
$\hat{\Gamma}=(z_x,z_y)|_{\Gamma}$ is a Jordan curve in $\R^2$ with
the property that if $(x,y)\in\Omega$ is in the interior of the
bounded domain determined by $\Gamma$, then $(z_x(x,y),z_y(x,y))$ is
contained in the interior of the bounded domain determined by
$\hat{\Gamma}$. Hence, it is clear that $\gamma\subset {\mathcal H}$
if, for instance, ${\mathcal H}$ is simply connected and ${\mathcal
U}\subset \R^3\times{\mathcal H}$.
\end{observacion}

In the above conditions, the expression \beq\label{metricapura}
ds^2=z_{xx}dx^2+2z_{xy}dxdy+z_{yy}dy^2 \eeq is a Riemannian metric
on $\Omega$. It is a well known fact that $ds^2$ admits conformal
coordinates $w:=u+iv$ such that

\beq\label{conforme2}ds^2=\frac{\sqrt{\varphi}}{u_xv_y-u_yv_x}|dw|^2.
\eeq That is, there exists a $C^2$-diffeomorphism

\begin{equation}\label{tristar}
\Phi:\Omega\rightarrow  \Lambda:=\Phi(\Omega)\subset \R^2 ,\qquad
(x,y)\mapsto \Phi(x,y)=(u(x,y),v(x,y))
\end{equation}
satisfying \beq x_uy_v-x_vy_u>0,\eeq and the Beltrami system
 \beq\label{sist}  \left(\begin{array}{c}
 v_x\\v_y
 \end{array}\right)=\frac{1}{\sqrt{\varphi}}\left(\begin{array}{cc}z_{xy} & -z_{xx}\\z_{yy} & -z_{xy}
 \end{array}\right)\left(\begin{array}{c}
 u_x\\u_y\end{array}\right).\eeq

Here, $\Lambda$ is a domain in $\R^2\equiv \C$ which is conformally
equivalent to either the punctured disc $\D^*$ or an annulus
$\A_{\varrho}=\{\zeta\in \C : 1<|\zeta|<\varrho\}$. As $z$ does not
extend smoothly across the origin, by \cite[Lemma 3.3]{HeB} we have:
  \begin{lema}\label{anillo}
$\Lambda$ is conformally equivalent to some annulus $\A_{\varrho}$.
  \end{lema}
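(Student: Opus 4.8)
The plan is to rule out the possibility that $\Lambda$ is conformally equivalent to the punctured disc $\D^*$ by showing that, if it were, then the conformal change of coordinates would force $z$ to extend smoothly (indeed analytically) across the origin, contradicting the hypothesis that the singularity is non-removable. Concretely, suppose for contradiction that $\Lambda$ is conformally $\D^*$. After composing $\Phi$ with this conformal equivalence we may assume $\Lambda=\D^*$ itself, so that $w=u+iv$ runs over the punctured unit disc and the metric $ds^2$ in \eqref{metricapura} takes the isothermal form \eqref{conforme2} on $\D^*$. The first task is to control the conformal factor $\lambda:=\sqrt{\varphi}/(u_xv_y-u_yv_x)$ near $w=0$: since $z$ extends continuously to the origin and is convex on $\Omega\cup\{(0,0)\}$, the gradient $(z_x,z_y)$ stays bounded, hence $\varphi=\det(D^2z)$ is bounded, and one gets that $\lambda\,|dw|^2$ has finite area and, more importantly, that the pullback of $ds^2$ extends as a (possibly degenerate) continuous metric across $w=0$. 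The standard removable-singularity machinery for isothermal coordinates (a bounded conformal factor on $\D^*$ with an integrability bound) then shows that the puncture $w=0$ corresponds to a single interior point, not a boundary circle; this is exactly the situation covered by \cite[Lemma 3.3]{HeB}, which we are entitled to invoke.

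The key step is therefore to verify the hypotheses of \cite[Lemma 3.3]{HeB} in our setting. That lemma asserts (in the form we need) that if the conformal type of $\Lambda$ were $\D^*$, then the immersion $\Phi^{-1}$ together with the data $z, z_x, z_y$ would extend across the puncture with the regularity of the equation, making the origin a removable singularity of $z$. To apply it we must check: (i) the metric $ds^2$ has bounded conformal factor near the puncture in the $w$-coordinate, which follows from the boundedness of $\varphi$ and the fact that $u_xv_y-u_yv_x$ is bounded away from $0$ and $\infty$ on compact subsets combined with the area estimate; and (ii) the functions $z$ and $Dz$, viewed as functions of $w\in\D^*$, are bounded, which again follows from convexity. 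Granting (i) and (ii), \cite[Lemma 3.3]{HeB} yields that $z$ extends $C^{k+1,\alpha}$ (resp. analytically) to the origin, contradicting our standing assumption that $(0,0)$ is an isolated singularity. Hence $\Lambda$ cannot be conformally $\D^*$, and since the only two possibilities for $\Lambda$ are $\D^*$ and an annulus $\A_\varrho$, it must be conformally equivalent to some $\A_\varrho$.

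I expect the main obstacle to be purely bookkeeping: making precise the claim that the conformal factor of $ds^2$ is bounded (or at least integrable with the right weight) near the puncture, since a priori $u_xv_y-u_yv_x$ could degenerate as $w\to 0$. This is handled by noting that $\Phi$ is a $C^2$-diffeomorphism on $\Omega$ and that the area of $(\Omega, ds^2)$, which equals $\int_\Omega \sqrt{\varphi}\,dx\,dy$, is finite because $\varphi$ is bounded on a bounded set; the finite area of the image $\Lambda$ in the flat metric $\lambda|dw|^2$ then prevents $\Lambda$ from being a full punctured disc with a non-integrable singularity and pins down that the puncture is a genuine point-puncture, precisely the case excluded by \cite[Lemma 3.3]{HeB}. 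Everything else is a direct citation.
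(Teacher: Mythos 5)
Your overall strategy matches the paper's: the paper also obtains the result by a direct appeal to \cite[Lemma 3.3]{HeB}, with the one-line observation that since $z$ does not extend smoothly across the origin, that lemma gives the annulus conclusion. You are right that the only two possibilities for the doubly connected domain $\Lambda$ are $\D^*$ and an annulus $\A_\varrho$ (the finite-area observation $\int_\Omega\sqrt{\varphi}\,dx\,dy<\infty$, which is correct once one notes that convexity forces $Dz$ to be bounded near the puncture, is what rules out the third possibility $\C^*$), and that \cite[Lemma 3.3]{HeB} is precisely what eliminates the $\D^*$ case for a non-removable singularity.

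However, the intermediate ``verification'' steps you insert are not sound as written, even though they are ultimately unnecessary because \cite[Lemma 3.3]{HeB} already applies under the standing hypotheses of Section~2. In particular, your claim that the conformal factor $\lambda=\sqrt{\varphi}/(u_xv_y-u_yv_x)$ is bounded near the puncture because $u_xv_y-u_yv_x$ is ``bounded away from $0$ and $\infty$ on compact subsets combined with the area estimate'' is a non-sequitur: control on compact subsets of $\Omega$ says nothing about the behavior of the Jacobian as $w\to 0$, and finiteness of $\int\lambda$ (integrability) does not imply $\lambda$ is bounded. Likewise the final sentence, asserting that finite area ``pins down that the puncture is a genuine point-puncture,'' conflates finite area with the conformal type being $\D^*$; finite area is compatible with the annulus case, and the determination of the conformal type is exactly the content of \cite[Lemma 3.3]{HeB}, not a consequence of your area estimate. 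In short: the citation carries the proof and your plan is essentially the paper's, but the additional reasoning you interposed to ``check the hypotheses'' is circular or incorrect at the two points noted, and would need to be excised or replaced by a direct invocation of the cited lemma.
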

Thus, in order to study solutions of \eqref{purema} with an isolated
singularity, we may assume $\Lambda=\A_{\varrho}$. If we denote
$\Sigma_R :=\{w: 0< {\rm Im} (w)<R\}$, then $\A_{\varrho}$ is
conformally equivalent to $\Gamma_R:= \Sigma_R /(2\pi \Z)$ for $R=
\log \varrho$. So, composing with this conformal equivalence we will
suppose that the map $\Phi$ in \eqref{tristar} is a diffeomorphism
from $\Omega$ into $\Gamma_R$; in particular, $\Phi$ is
$2\pi$-periodic and $(u,v)$ will denote the canonical coordinates of
the strip $\Sigma_R$.

Let $G=\{(x,y,z(x,y)):(x,y)\in \Omega\}\subset \R^3$ be the graph of
$z(x,y)$. By using the parameters $(u,v)$, we may parameterize $G$
as a map

 \begin{equation}\label{grafconf}
\psi (u,v)=(x(u,v),y(u,v),z(u,v)):\Gamma_R\flecha G\subset \R^3
  \end{equation}
such that $\psi$ extends continuously to $\R$ with
$\psi(u,0)=(0,0,0)$.

In this section we prove the following result about the asymptotic
behavior, parametrization, and uniqueness in terms of the limit
gradient of solutions to \eqref{purema} at an isolated singularity.

\begin{teorema}\label{mainth22}
In the previous conditions, assume that $\varphi\in C^{k}(\cU)$,
$k\geq 4$ (resp. $\varphi\in C^{\omega} (\cU)$). Then:
\begin{enumerate}
   \item
$\gamma$ is a regular, strictly convex Jordan curve in $\R^2$, which
is $C^{k,\alpha}$ $\forall\alpha\in(0,1)$ (resp. analytic).
 \item
If $(u,v)$ denote conformal coordinates on $\Sigma_R$ for the metric
$ds^2$ as explained previously, and $p=z_x$, $q=z_y$ are viewed as
functions of $(u,v)$, then those functions extend
$C^{k,\alpha}$-smoothly (resp. analytically) to $\Sigma_R\cup \R$
and $\gamma(u):=(p(u,0),q(u,0))$ is a $C^{k,\alpha}$ (resp.
analytic), $2\pi$-periodic, negatively oriented parametrization of
$\gamma$ such that $\gamma'(u)\neq(0,0)$ for all $u\in\R$.
 \item
Let $\varphi\in C^{\omega}(\cU)$, and consider $z,z'\in
C^{\omega}(\Omega)$ two solutions to \eqref{purema} with an isolated
singularity at $(0,0)$ and $z(0,0)=z'(0,0)=0$, with the same limit
gradient $\gamma\subset \cH$ at the origin, and such that both
$z_{xx}$ and $z'_{xx}$ are positive. Then the graphs of $z$ and $z'$
agree on an open set containing the origin.
 \end{enumerate}
\end{teorema}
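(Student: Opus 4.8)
The plan is to exploit the conformal parametrization $\psi$ of the graph $G$ introduced in \eqref{grafconf} and reduce the three assertions to a boundary-regularity statement for a first-order elliptic system, followed by a Cauchy-uniqueness argument. First I would observe that in the conformal coordinates $(u,v)$ the map $\psi(u,v)=(x,y,z)$, together with the gradient functions $p=z_x$ and $q=z_y$ viewed as functions of $(u,v)$, satisfies a quasilinear first-order elliptic system: the Beltrami system \eqref{sist} rewritten in the $(u,v)$ variables expresses $x_u,y_u,x_v,y_v,p_u,q_u,\dots$ in terms of each other with coefficients that are real-analytic (resp. $C^k$) functions of $(x,y,z,p,q)$ through $\varphi$ and its derivatives. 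Near the boundary line $v=0$ the map $\psi$ collapses: $x(u,0)=y(u,0)=z(u,0)=0$ by construction. The first key step is to show that despite this collapse, the \emph{gradient} functions $(p(u,v),q(u,v))$ extend regularly up to $v=0$. This is where surface theory enters: the conformal metric $ds^2$ in \eqref{metricapura} is, up to the positive conformal factor in \eqref{conforme2}, the second fundamental form of $G$ (since $z$ is locally convex and $\det D^2z=\varphi>0$), so the pair $(p,q):\Gamma_R\to\R^2$ is precisely the \emph{conformal parametrization of $G$ with respect to its second fundamental form composed with the Gauss-type map} $(x,y)\mapsto(z_x,z_y)$; the Legendre-dual picture shows $(p,q)$ parametrizes a locally convex surface (the Legendre transform graph) which, because the singularity of $z$ corresponds to a bounded convex body in the $(p,q)$-plane with the puncture becoming its boundary, is a bona fide immersion up to and including $v=0$.

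The second step is the boundary regularity itself. Having identified the right unknowns, I would set up the elliptic system satisfied by $(x,y,z,p,q)$ (or a suitable reduced subset, e.g. $(p,q)$ and the conformal factor) as a nonlinear elliptic boundary value problem on the half-strip $\{0<v<R\}$ with an oblique or Dirichlet-type condition at $v=0$ coming from the fact that the limit gradient $\gamma$ is a convex curve and $\psi(u,0)=0$. Then I would invoke standard boundary Schauder theory (for the $C^k$ case, giving $C^{k,\alpha}$ up to the boundary for all $\alpha\in(0,1)$) and the Morrey/real-analytic boundary regularity theorem for elliptic systems with analytic coefficients and analytic boundary data (for the $C^\omega$ case). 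This yields that $p,q$ extend $C^{k,\alpha}$-smoothly (resp. analytically) to $\Sigma_R\cup\R$, proving (2) except for the nondegeneracy and orientation of $\gamma(u)=(p(u,0),q(u,0))$. To get $\gamma'(u)\neq 0$ and that $\gamma$ is a regular strictly convex Jordan curve (assertion (1)), I would use that the conformality of $\psi$ with respect to the second fundamental form forces $(p,q)$ to be a conformal parametrization of a strictly convex surface; the curvature of the boundary curve $\gamma$ is then controlled from below by the geometry, and $2\pi$-periodicity of $\Phi$ plus the collapsing of $v=0$ to the single point $0$ forces the curve $\gamma$ to wind exactly once — negatively oriented because we normalized $z_{xx}>0$ and $x_uy_v-x_vy_u>0$, which fixes the sign of the winding. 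That $\gamma$ is a Jordan curve (embedded, not just immersed) follows from the convex-body description in Remark \ref{remhdef}: $\gamma$ is the boundary of the limit of the nested convex gradient images $\widehat\Gamma$.

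For assertion (3), the uniqueness statement, I would argue by Cauchy uniqueness for the analytic elliptic system along the boundary. Given two analytic solutions $z,z'$ with the same limit gradient $\gamma\subset\cH$ and both $z_{xx},z'_{xx}>0$, each produces, by steps above, an analytic map up to the boundary line $v=0$ solving the same analytic elliptic system, with the \emph{same} boundary curve $\gamma(u)$ (after matching the parametrization, using that $\gamma$ determines $u$ up to a translation and that we may normalize). Because the system is elliptic with analytic coefficients and the two solutions share their Cauchy data $(p,q)|_{v=0}=\gamma(u)$ and $\psi|_{v=0}=0$ along the noncharacteristic boundary $\{v=0\}$ — here one must check that the boundary is noncharacteristic, which follows from $\gamma'(u)\neq 0$ — the Cauchy–Kovalevskaya theorem for first-order analytic systems gives that $z$ and $z'$, as functions recovered from the common solution of the Cauchy problem, coincide on a neighborhood of the boundary strip, hence their graphs agree on an open set containing the origin. \textbf{The main obstacle} I expect is the boundary regularity in the analytic category (step two): one must verify carefully that after choosing the right unknowns the degenerate collapse $\psi(u,0)=0$ does not make the boundary characteristic or the system singular, i.e.\ that the genuinely elliptic, regular problem is for $(p,q)$ (and the log of the conformal factor) rather than for $\psi$ itself, and then apply the Morrey-type analytic boundary regularity theorem — together with checking noncharacteristicity $\gamma'(u)\neq 0$, which is itself a consequence of the strict convexity extracted from the surface geometry and must be established before the Cauchy-uniqueness step can be run.
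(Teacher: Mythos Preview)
Your overall architecture---reduce to a first-order system in conformal coordinates, establish boundary regularity of $(x,y,z,p,q)$ up to $v=0$, then use Cauchy--Kowalevsky uniqueness for item (3)---matches the paper, and your treatment of item (3) is essentially correct. However, there is a genuine gap in your handling of the two hardest points: the nondegeneracy $\gamma'(u)\neq 0$ and the strict convexity of $\gamma$. Your claim that ``conformality forces $(p,q)$ to be a conformal parametrization of a strictly convex surface'' with curvature ``controlled from below by the geometry'' is not an argument; from \eqref{d1p} one has $\gamma'(u)=(p_u,q_u)(u,0)=\sqrt{\varphi}\,(y_v,-x_v)(u,0)$, so $\gamma'(u_0)=0$ precisely when the normal derivative $(x_v,y_v)(u_0,0)$ of the collapsed map $\psi$ vanishes, and nothing in the Legendre-dual picture you invoke rules this out a priori---that the dual is an immersion up to $v=0$ is exactly the statement to be proved, not an input. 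The paper handles this by an entirely different, concrete mechanism: a Hartman--Wintner nodal-curve analysis applied to $x(u,v)$ (and its rotations $x_\theta=\cos\theta\,x+\sin\theta\,y$), exploiting that $G\cap\{x_\theta=0\}$ consists of exactly two arcs to cap the number of points where $\gamma'$ could vanish, and then a contradiction argument to rule out any such point. Likewise, strict convexity (Claim \ref{gammareg2}) is obtained not from a soft ``geometry controls curvature'' statement but by introducing a specific scalar $\omega$ via $I,II$ in \eqref{fundamxx}, deriving the sinh-Gordon-type equation \eqref{laplaom}, and again using Hartman--Wintner on the nodal line $v=0$ together with $\omega>0$ in $\Gamma_R$ to force $\omega_v(u,0)>0$, which is then identified with (a multiple of) the geodesic curvature of $N(u,0)$.

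A secondary issue: your boundary regularity step is not covered by ``standard boundary Schauder theory,'' because a priori you only know $(x,y,z)\in C^2(\Gamma_R)\cap C^0(\overline{\Gamma_R})$ with $(x,y,z)(u,0)=0$; there is no initial H\"older control of the gradient to feed into Schauder. The paper instead derives the quadratic estimate $|\Delta Y|\le c\,|\nabla Y|^2$ from \eqref{laplacianos} and applies Heinz's theorem \cite{He} to get the first $C^{1,\alpha}$ step, after which a bootstrap through \eqref{d1p} and \eqref{zfacil0} runs; analyticity is then obtained from M\"uller's theorem \cite{Mu} for the mixed boundary problem, not from a generic Morrey-type statement. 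You correctly flagged this as the main obstacle, but the resolution requires these specific tools rather than off-the-shelf Schauder estimates.
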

\begin{proof}
From now on, we will consider all the functions depending on the
parameters $(u,v)$ via $(x,y)=\Phi^{-1}(u,v)$. For simplicity, we
keep the same notation. From system \eqref{sist} (see for example
\cite{Bey1}) we have the following equations:
 \beq\label{d1p}
(p_u,p_v,q_u,q _v)=\sqrt{ \varphi }(y_v,-y_u,-x_v,x_u) \eeq Moreover
we have that

  $$z_v=px_v+qy_v=\displaystyle \frac{1}{\sqrt{\varphi}}(q p_u-p q_u).$$
Therefore, $${\textbf z} (u,v):=
(x(u,v),y(u,v),z(u,v),p(u,v),q(u,v)):\Gamma_R\flecha \R^5$$ is a
solution to system

  \begin{equation}\label{sfacil2p}
  \left(\begin{array}
  {l}x\\y\\z\\p\\q
  \end{array}\right)_v=M\left(\begin{array}
  {l}x\\y\\z\\p\\q
  \end{array}\right)_u,\qquad M=
  \frac{1}{\sqrt{\varphi}}\left(\begin{array}
  {ccccc}0&0&0&0&-1\\
  0&0&0&1&0\\
  0&0&0&q&-p\\
  0&-\varphi&0&0&0\\
  \varphi&0&0&0&0
  \end{array}\right).
  \end{equation}
The following claim provides a boundary regularity result for
${\textbf z}(u,v)$:

\begin{claim}\label{extanalitica}
In the above conditions, if $\varphi \in C^k(\cU)$ (resp. $\varphi
\in C^{\omega} (\cU)$), then ${\textbf z}(u,v)$ extends as a
$C^{k,\alpha}$ map $\forall \alpha\in(0,1)$ (resp. as a real
analytic map) to $\Gamma_R\cup \R$.
\end{claim}
\begin{proof}[Proof of the Claim]
 The first part of the proof follows a bootstrapping method.
Consider an arbitrary point of $\R$, which we will suppose without
loss of generality to be the origin. Also, consider for $0<\delta<R$
the domain $\D^+=\{(u,v): 0<u^2+v^2<\delta^2\}\cap \Gamma_R$.

From  \eqref{d1p} it follows that (cf. \cite{HeB})
\beq\label{laplacianos}\begin{array}{lcl}\Delta x&=&h_1(x_u^2+x_v^2)+h_2(x_uy_u+x_vy_v)+h_3(x_uy_v-x_vy_u)\\
\Delta
y&=&h_1(x_uy_u+x_vy_v)+h_2(y_u^2+y_v^2)+h_4(x_uy_v-x_vy_u)\end{array}\eeq
where the coefficients   $h_1=h_1(x,y,z,p,q),\ldots
,h_4=h_4(x,y,z,p,q)$ are
\begin{equation}\label{coefhi} \begin{array}{lcl}h_1&=& -\frac{1}{2\varphi}(\varphi_x+\varphi_z p),\\
h_2&=& -\frac{1}{2\varphi}(\varphi_y+\varphi_zq),\\
h_3&=&\frac{1}{\sqrt{\varphi
}}(-\frac{1}{2}\varphi_p),\\
h_4&=&\frac{1}{\sqrt{\varphi}}( -\frac{1}{2}\varphi_q),
\end{array} \end{equation}
all of them evaluated at ${\textbf z}(u,v)$.  On the other hand,
observe that the inequalities $$ (x_u -y_v)^2+(x_v+y_v)^2\geq 0,
\hspace{1cm} (x_u -y_u)^2+(x_v-y_v)^2\geq 0 $$ lead, respectively,
to $x_uy_v-x_v y_u\leq \frac{1}{2}(|\nabla x|^2+|\nabla y|^2)$ and
$x_uy_u+x_v y_v\leq \frac{1}{2}(|\nabla x|^2+|\nabla y|^2)$.

Hence, if we denote $Y=(x,y):\D^+\longrightarrow\Omega$, formula
\eqref{laplacianos} and the fact that $h_1,\ldots ,h_4$ are bounded
(since $\gamma\subset \cH$) yield \beq\label{ineqlaplac}|\Delta
Y|\leq c(|\nabla x|^2+|\nabla y|^2) \eeq for a certain constant
$c>0$.

Observe that $Y\in C^2(\D^+)\cap C^0(\overline{\D^+})$ with
$Y(u,0)=(0,0)$ for all $u$. Hence, we can apply   Heinz's Theorem in
\cite{He} to deduce that $Y\in
C^{1,\alpha}(\overline{\D^+_{\varepsilon}})$  for all
$\alpha\in(0,1)$, where $\D^+_{\varepsilon}=\D^+\cap
B(0,\varepsilon)  $ for a certain $0<\varepsilon<\delta$.

Now, the right hand side  terms in \eqref{d1p} are bounded in
$\overline{\D^+_{\varepsilon}}$ and so $p,q\in
W^{1,\infty}(\overline{\D^+_{\varepsilon}})$. Hence $p,q\in
C^{0,1}(\overline{\D^+_{\varepsilon}})$ (cf. \cite[pag. 154]{GiTr}).

Taking into account \beq\label{zfacil0}
 z_u=px_u+qy_u, \qquad
 z_v=px_v+qy_v,
 \eeq
we obtain $z\in C^{1,\alpha}(\overline{\D^+_{\varepsilon}})$
$\forall \alpha\in (0,1)$. Then, the right hand side functions in
\eqref{d1p} are Hölder continuous of any order in
$\overline{\D^+_{\varepsilon}}$. That is, $p,q\in
C^{1,\alpha}(\overline{\D^+_{\varepsilon}})$ $\forall
\alpha\in(0,1)$.


With this, we have from \eqref{laplacianos}  that $\Delta Y$ is
Hölder continuous in $\overline{\D^+_{\varepsilon}}$. Then, a
standard potential analysis argument (cf. \cite[Lemma 4.10]{GiTr})
ensures that $x,y\in C^{2,\alpha}(\overline{\D^+_{\varepsilon/2}})$.
Again, by formula \eqref{d1p} we have that $p,q\in
C^{2,\alpha}(\overline{\D^+_{\varepsilon/2}})$ and so, from
\eqref{zfacil0} that $z\in
C^{2,\alpha}(\overline{\D^+_{\varepsilon/2}})$.

At this point we may apply the same argument to $Y_u$ and  $Y_v$, in
order to obtain that $x,y,z,p,q\in
C^{3,\alpha}(\overline{\D^+_{\varepsilon/4}})$. A recursive process
leads to the fact that ${\textbf z}=(x,y,z,p,q)$ is $C^{k,\alpha}$
$\forall \alpha\in(0,1)$ (resp. $C^{\8}$) at the origin. As we can
do the same argument for all points of $\R$ and not just the origin,
we conclude that ${\textbf z}(u,v)\in C^{k,\alpha} (\Gamma_R\cup
\R)$ (resp.  ${\textbf z}(u,v)\in C^{\8} (\Gamma_R\cup \R)$).

Finally, suppose that $\varphi\in C^{\omega} (\cU)$. From
\eqref{d1p}, \beq\label{laplacianos2}
\def\arraystretch{1}\begin{array}{lll}
\Delta p&=&(\sqrt{\varphi\circ {\textbf z}})_uy_v-(\sqrt{\varphi\circ {\textbf z}})_vy_u \\

\Delta q&=&-(\sqrt{\varphi\circ {\textbf
z}})_ux_v+(\sqrt{\varphi\circ {\textbf
z}})_vx_u \\
\Delta z&=&p_ux_u+p_vx_v+q_uy_u+q_vy_v+p\Delta x+q \Delta y.\\

\end{array}\eeq
Therefore, $\textbf{z} (u,v)$ satisfies \beq\label{mixto} \Delta
\textbf{z} = h(\textbf{z},\textbf{z}_u,\textbf{z}_v)\eeq where
$h:\mathcal{O}\subset \R^{15}\flecha \R^5$ is a real analytic
function on an open set $\mathcal{O}$ of $\R^{15}$ containing the
closure of the bounded set $\{({\textbf z},{\textbf z}_u,{\textbf
z}_v)(u,v): (u,v)\in \Gamma_R\}$. Moreover, if we write
$$\textbf{z} (u,v)=(\psi(u,v),\phi(u,v)):\Gamma_R\flecha \R^3\times \R^2\equiv \R^5$$ where $\psi(u,v)$ is given by
\eqref{grafconf} and $\phi(u,v)=(p(u,v),q(u,v))$, then we see that
${\textbf z}(u,v)$ is a solution to \eqref{mixto} that meets the
mixed initial conditions
$$\left\{ \def\arraystretch{1.5} \begin{array}{l} \psi(u,0)=(0,0,0), \\ \phi_v(u,0)= (0,0).\end{array}\right.$$
As ${\textbf z}\in C^{\8}(\Gamma_R\cup \R)$ by the previous
bootstrapping argument, we are in the conditions to apply  Theorem 3
in \cite{Mu} to $\textbf{z}$ around every point in $\R$. Thus, we
deduce that $\textbf{z}$ is real analytic in $\Gamma_R\cup\R$, which
concludes the proof of Claim \ref{extanalitica}. \end{proof}

It follows from Claim \ref{extanalitica} that the functions $p(u,v)$
and $q(u,v)$ extend $C^{k,\alpha}$-smoothly $\forall\alpha\in(0,1)$
(resp. analytically) to $\Gamma_R\cup \R$, so that
$(\alfa(u),\beta(u)):=(p(u,0),q(u,0))$ is a $2\pi$-periodic map. Let
now $\gamma\subset \R^2$ denote the limit gradient of $z(x,y)$.
Then, clearly $\gamma=\{(\alfa(u),\beta(u)):u\in \R\}$, and so we
get that $\gamma$ is a closed curve in $\R^2$, possibly with
singularities, that can be parameterized as a $2\pi$-periodic
$C^{k,\alpha}$ (resp. analytic) function as
$\gamma(u)=(\alfa(u),\beta(u))$ in terms of the conformal parameters
$(u,v)$ associated to the solution $z(x,y)$.

\begin{claim}\label{gammareg}
$\gamma'(u)\neq (0,0)$ for every $u\in \R$.
\end{claim}
\begin{proof}
We start by proving that $\gamma'(u)$ can only vanish at most two
points in $[0,2\pi)$. Indeed, assume arguing by contradiction that
$\gamma'(u_1)=\gamma'(u_2)=\gamma'(u_3)=0$ for three distinct values
$u_1,u_2,u_3\in [0,2\pi)$. Since $x(u,0)=0$ for every $u\in \R$, by
\eqref{d1p} we see that $Dx (u_i,0)=0$, $i=1,2,3$. Noting then that
$x(u,v)$ satisfies the elliptic PDE \eqref{laplacianos}, and that
the zero function is another solution to the same PDE, we deduce by
Theorem $\dag$ in \cite{HaWi} that $x_{uu} x_{vv}-x_{uv}^2 <0$ in a
punctured neighborhood of each $(u_i,0)$ in the $u,v$-plane. In
other words, the axis $v=0$ is a nodal curve of $x(u,v)$ that is
crossed at $u_1,u_2,u_3$ by three other nodal curves
$\delta_1,\delta_2,\delta_3$ at a positive angle.

Next, observe that the map \eqref{grafconf} is a diffeomorphism from
$\Gamma_R :=\Sigma_R /(2\pi \Z)$ into the graph $G=\{(x,y,z(x,y)):
(x,y)\in \Omega\}$. As $G$ is a graph, $G\cap \{x=0\}\subset \R^3$
is formed by exactly two regular curves with an endpoint at the
origin. Thus, there cannot exist three nodal curves of $x(u,v)$ in
$\Gamma_R$. This contradiction shows that $\gamma'(u)$ vanishes at
most at two points in $[0,2\pi)$.

As a consequence, as $\gamma(\R)$ is convex, we can choose
$u_1,u_2\in [0,2\pi)$ with $\gamma'(u_i)\neq (0,0)$ for $i=1,2$, and
such that the respective support lines to $\gamma$ passing through
$\gamma(u_1)$ and $\gamma(u_2)$ are both tangent to a certain
direction $v_{\theta}= (\cos \theta, \sin \theta)\in \S^1$. In
particular, $-\sin \theta \alfa'(u_i) + \cos \theta \beta'(u_i)=0$
holds for $i=1,2$. Using \eqref{d1p} and the fact that
$x(u,0)=y(u,0)=0$ for every $u\in \R$, we deduce that $x_{\theta}
(u,v):= \cos \theta x(u,v) +\sin \theta y(u,v)$ satisfies
$x_{\theta} (u,0)=0$ for every $u\in \R$, and $Dx_{\theta} (u_1,0)=D
x_{\theta} (u_2,0)=(0,0)$. Also, a computation using
\eqref{laplacianos} shows that, if we denote $y_{\theta} (u,v):=
-\sin \theta x(u,v) + \cos \theta y(u,v)$, then $x_{\theta} (u,v)$
satisfies the elliptic PDE
\begin{equation}
   \begin{array}{lcl}
  \Delta x_{\theta}&=& H_1 ((x_{\theta})_u^2+(x_{\theta})_v^2)+H_2 ((x_{\theta})_u (y_{\theta})_u
  +(x_{\theta})_v(y_{\theta})_v) \\&& \\&&+H_3 ((x_{\theta})_u (y_{\theta})_v- (x_{\theta})_v (y_{\theta})_u )
 \end{array}\end{equation}
where the coefficients $H_i:=H_i(u,v)\in C^{k-1}(\Gamma_R\cup \R)$
are given in terms of the functions in \eqref{coefhi} by
$$\begin{array}{lcl}
H_1&=& (h_1\circ {\bf z})\cos \theta +(h_2\circ {\bf z})\sin \theta,\\
H_2&=& (h_2\circ {\bf z})\cos \theta -(h_1\circ {\bf z})\sin \theta,\\
H_3&=& (h_3\circ {\bf z})\cos \theta +(h_4\circ {\bf z})\sin \theta.
\end{array} $$  As the zero function is
also a solution of this PDE, we can deduce again by Theorem $\dag$
in \cite{HaWi} that $x_{\theta}(u,v)$ has two nodal curves
$\gamma_1,\gamma_2$ that intersect at a positive angle the nodal
curve $v=0$ at the points $(u_1,0)$, $(u_2,0)$. Geometrically, the
restriction of these two nodal curves $\gamma_1,\gamma_2$ to
$\Gamma_R$ corresponds (as explained above for the case $\theta=0$)
to the intersection of the graph $G$ with the plane $\cos \theta x +
\sin \theta y=0$ in $\R^3$. In particular, the axis $v=0$ cannot be
crossed by any other nodal curve of $x_{\theta} (u,v)$.

Finally, note that if $\gamma'(\xi)=(0,0)$ for some $\xi\in
[0,2\pi)$, then by \eqref{d1p} we would have $x_{\theta} (\xi)=0$
and $D x_{\theta} (\xi)=(0,0)$. Therefore, there would exist a nodal
curve of $x_{\theta}(u,v)$ crossing the $v=0$ axis at $\xi$. Thus,
$\xi=u_1$ or $\xi=u_2$, which is a contradiction since we initially
chose $u_1,u_2$ to be regular points of $\gamma$. Thus,
$\gamma'(u)\neq (0,0)$ for every $u\in \R$, which proves Claim
\ref{gammareg}.
\end{proof}

\begin{claim}\label{gammareg2}
The regular curve $\gamma(u)$ is strictly locally convex and
negatively oriented, i.e. it holds $\alfa''(u)
\beta'(u)-\alfa'(u)\beta''(u)>0$ for every $u\in \R$.
\end{claim}
\begin{proof}
Let $\psi:\Gamma_R\flecha \R^3$ be the conformal parametrization of
the graph $z=z(x,y)$ given in \eqref{grafconf}. So, $\psi$ is an
immersion with unit normal
 \begin{equation}\label{unitnormal}
 N(u,v)=\frac{(-p(u,v),-q(u,v),1)}{\sqrt{1+p(u,v)^2+q(u,v)^2}}
:\Gamma_R \flecha \S^2.
 \end{equation}
By Claim \ref{extanalitica}, $\psi,N \in C^{k,\alfa} (\Gamma_R\cup
\R)$, with
$N(u,0)=(-\alfa(u),-\beta(u),1)/\sqrt{1+\alfa(u)^2+\beta(u)^2}$ and
$\psi(u,0)=(0,0,0)$. In particular, it follows from Claim
\ref{gammareg} that $N(u,0)$ is a $2\pi$-periodic regular curve in
$\S^2$. Moreover, a simple computation shows that $N(u,0)$ has
negative geodesic curvature in $\S^2$ at every point if and only if
$\alfa''(u) \beta'(u) - \alfa' (u)\beta''(u) > 0$ for every $u$.

Note that the metric $ds^2$ in \eqref{metricapura} is conformally
equivalent to the second fundamental form of the graph $z=z(x,y)$.
Thus, if we write $w=u+iv$, the first and second fundamental forms
of $z=z(x,y)$ with respect to this parametrization are written as
\begin{equation}\label{fundamx}
\left\{\def\arraystretch{1.5} \begin{array}{rrc} I=\esiz d\psi,d\psi
\esde& =& Q \, dw^2 + 2\mu |dw|^2 +
\bar{Q} d\bar{w}^2, \\
II=-\esiz d\psi,dN\esde&=& 2\rho |dw|^2,
\end{array}\right.
\end{equation}
where $Q:= \esiz \psi_w,\psi_w\esde:\Gamma_R\cup \R\flecha \C$,
(recall that $\parc_w:=(\parc_u-i\parc_v)/2$), and $\mu,\rho
:\Gamma_R\cup \R \flecha (0,\8)$ are positive real functions. By
Claim \ref{extanalitica}, $Q,\mu,\rho$ are $C^{k-1,\alfa}$-smooth in
$\Gamma_R\cup \R$.

Also, note that by \eqref{purema} the Gaussian curvature $K$ of
$z=z(x,y)$ is $$K=\frac{z_{xx} z_{yy}-z_{xy}^2}{(1+z_x^2+z_y^2)^2}=
\frac{\varphi(x,y,z,z_x,z_y)}{(1+z_x^2+z_y^2)^2},$$ that is, in
terms of the conformal parameters $(u,v)$ we have
$$K(u,v)=\frac{\varphi\circ {\bf z}
(u,v)}{(1+p(u,v)^2+q(u,v)^2)^2}\in C^{k}(\Gamma_R\cup \R),$$ and so
$K(u,v)>0$ for all $(u,v)\in \Gamma_R\cup \R$ since $\gamma\subset
\cH$. A direct computation using \eqref{unitnormal} shows that, in
$\Gamma_R\cup \R$,

\begin{equation}\label{provex}
N\times N_u = -\sqrt{K} \psi_v, \hspace{1cm} N\times N_v = \sqrt{K}
\psi_u,
\end{equation}
where $\times$ denotes the cross product in $\R^3$. From here,
\begin{equation}\label{forQ} \def\arraystretch{2}\begin{array}{lll}
Q(u,0)&=&\displaystyle\frac{1}{4}\left(\esiz \psi_u,\psi_u\esde -
\esiz \psi_v,\psi_v \esde -2 i \esiz \psi_u,\psi_v\esde\right)(u,0)
\\ & =& -\displaystyle\frac{1}{4}\esiz\psi_v,\psi_v\esde (u,0)=
\displaystyle\frac{-1}{4K}\esiz N\times N_u,N\times N_u\esde (u,0)
\\ & = & \displaystyle\frac{-1}{4K} \esiz N_u,N_u\esde (u,0).
\end{array}\end{equation}
In particular, since $\esiz N_u,N_u\esde (u,0) >0$ for every $u$ as
we explained above, we may assume by choosing a smaller $R>0$ if
necessary that $Q$ does not vanish on $\Gamma_R\cup \R$. Using now
that $K={\rm det} (II)/{\rm det (I)}$ on $\Gamma_R$ and the previous
boundary regularity we get from \eqref{fundamx} that
\begin{equation}\label{roca}
\rho^2=K(\mu^2-|Q|^2) \hspace{1cm} \text{ in } \Gamma_R\cup \R.
\end{equation}
Since $Q\neq 0$, \eqref{roca} implies the existence of a function
$\omega\in C^{k-1,\alfa}(\Gamma_R\cup \R)$ such that $\mu = |Q|\cosh
\omega$ and $\rho=\sqrt{K} |Q| \sinh \omega$. Note that $\omega>0$
on $\Gamma_R$ and $\omega(u,0)=0$ for every $u\in \R$. In
particular, we can rewrite \eqref{fundamx} as
\begin{equation}\label{fundamxx}
\left\{\def\arraystretch{1.5} \begin{array}{rrc} I=\esiz d\psi,d\psi
\esde& =& Q \, dw^2 + 2|Q|\cosh \omega |dw|^2 +
\bar{Q} d\bar{w}^2, \\
II=-\esiz d\psi,dN\esde&=& 2\sqrt{K}|Q|\sinh\omega |dw|^2.
\end{array}\right.
\end{equation}
A standard derivation of the Gauss-Codazzi equations for $\psi$ in
terms of the complex parameter $w=u+iv$ and the data $K,Q,\omega$
(see for instance \cite{Bob}, pp. 118-119) shows that the function
$\omega$ satisfies

\begin{equation}\label{pdebob}
\omega_{w\bar{w}} + U_{\bar{w}} -V_{w} + K |Q| \sinh \omega =0,
\end{equation}
where $$U=\frac{-K_{\bar{w}} Q}{4K |Q|} \sinh \omega, \hspace{1cm}
V=\frac{K_{w} \overline{Q}}{4K |Q|} \sinh \omega.$$ In terms of the
parameters $(u,v)$, \eqref{pdebob} is a quasilinear elliptic PDE for
$\omega$ of the type
 \begin{equation}\label{laplaom}
\Delta \omega +a_1\, \omega_u \cosh \omega +  a_2 \, \omega_v \cosh
\omega + a_3\sinh \omega =0,
\end{equation}
where $a_i=a_i(u,v)\in C^{k-2}(\Gamma_R\cup \R)$. Observe that
$\omega=0$ is a trivial solution to \eqref{laplaom}.

Moreover, if we denote $\sigma(u):=N(u,0):\R/(2\pi \Z)\flecha \S^2$,
we have using \eqref{provex}, \eqref{forQ}

$$\def\arraystretch{1.5}\begin{array}{lll}
\esiz \sigma'',\sigma\times \sigma'\esde & = & \esiz N_{uu}, N\times
N_u\esde (u,0)= \sqrt{K} \esiz N\times \psi_{uv},N\times N_u\esde
(u,0) \\ & =& \sqrt{K} \esiz \psi_{uv},N_u\esde (u,0)= \sqrt{K}
\left( \frac{\parc}{\parc v} (\esiz \psi_u,N_u\esde ) - \esiz
\psi_u,N_{uv}\esde \right) (u,0) \\ &=& \sqrt{K} \frac{\parc}{\parc
v} (\esiz \psi_u,N_u\esde) (u,0) = - 2 K |Q|  \omega_v \cosh \omega
(u,0)= -2 K |Q|  \omega_v (u,0) \\ &=& -\frac{1}{2} \esiz
\sigma',\sigma'\esde \omega_v (u,0).
\end{array}$$
Therefore, $$\omega_v(u,0) = -\frac{2\esiz
\sigma''(u),\sigma(u)\times \sigma'(u)\esde}{\esiz
\sigma'(u),\sigma'(u)\esde}= -2 ||\sigma'(u)|| \kappa_{\sigma}
(u),$$ where $\kappa_{\sigma}$ denotes the geodesic curvature of
$\sigma$ in $\S^2$. Let us recall at this point that the real axis
is a nodal curve of $\omega$. Since $\omega$ is a solution to the
elliptic PDE \eqref{laplaom}, by Theorem $\dag$ in \cite{HaWi} we
deduce that, at the points $(u,0)$ where $\omega_v (u,0)= 0$ there
exists at least one nodal curve of $\omega$ that crosses the real
axis at a definite angle. But this situation is impossible, since
$\omega>0$ in $\Gamma_R$. Therefore we see that $\omega_v (u,0)> 0$
for every $u$. Consequently, the geodesic curvature of $N(u,0)$ in
$\S^2$ is strictly negative. As explained previously, this condition
implies that $\alfa''(u) \beta'(u) - \alfa'(u)\beta'' (u) >0$ for
every $u\in \R$. This proves Claim \ref{gammareg2}.
\end{proof}

We observe that these three claims together with the paragraph above
Claim \ref{gammareg} prove the first two items in Theorem
\ref{mainth22}.

In order to prove item (3) of Theorem \ref{mainth22}, assume that
$\varphi$ (and hence any solution to \eqref{purema}) is analytic.
Observe that the map ${\bf z}(u,v)$ can be recovered in terms of an
analytic, $2\pi$-periodic curve $\gamma(u)=(\alfa(u),\beta(u))$ as
the unique solution to the Cauchy problem for the system
\eqref{sfacil2p} with the initial condition
 \begin{equation}\label{inconfac}
 \textbf{ z}(u,0)=(0,0,0,\alfa(u),\beta(u)).
 \end{equation}
Also, observe that the parameters $(u,v)\in\Gamma_R$ associated to
the solution $z$ of \eqref{purema} are defined up to $2\pi$-periodic
conformal changes of $\Gamma_R$ that simply yield regular, analytic
reparametrizations
 of the limit gradient $\gamma$.

Taking this into account, we deduce by the uniqueness of the
solution to the Cauchy problem for system \eqref{sfacil2p} that if
two solutions $z,z'$ to \eqref{purema} with $\varphi\in C^{\omega}
(\cU)$ satisfy: (i) $z_{xx}>0$, $z'_{xx}>0$, (ii) $z$ and $z'$ have
an isolated singularity at the origin, and (iii) both $z$ and $z'$
have the same limit gradient $\gamma\subset \cH$ at the singularity,
then their graphs agree on a neighborhood of the origin. This
finishes the proof of Theorem \ref{mainth22}.
\end{proof}

\section{Existence of isolated singularities and the proof of Theorem \ref{main}}\label{sec: existencia}

In this section we consider the general elliptic equation of
Monge-Ampère type in dimension two, i.e. the fully nonlinear PDE
\eqref{eq0}. Note that \eqref{eq0} can be rewritten as
 \beq\label{eq1}
   Az_{xx}+2B z_{xy}+C z_{yy} +z_{xx} z_{yy}-z_{xy}^2=E,\eeq
where $A=A(x,y,z,z_x,z_y),\dots, E=E(x,y,z,z_x,z_y)$ are defined on
an open set $\cU\subset \R^5$ and satisfy on $\cU$ the ellipticity
condition \beq\label{elliptic} \mathcal{D}:=AC-B^2+E>0. \eeq

We can also rewrite \eqref{eq1} as
$(A+z_{yy})(C+z_{xx})-(B-z_{xy})^2= \mathcal{D}>0$, from where we
see that $C+z_{xx}$ is never zero.

The next theorem provides a general existence result for solutions
to \eqref{eq0} with an isolated singularity at the origin and a
prescribed limit gradient at the singularity. We recall the
definition of $\Gamma_R$, $\Sigma_R$ in Section 2, and denote
$\widehat{\Gamma_R}:=\widehat{\Sigma_R}/(2\pi \Z)$ where
$\widehat{\Sigma_R}=\{w\in \C: -R<{\rm Im} (w) <R\}$.

\begin{teorema}\label{mainth11}
Assume that the coefficients $A,\dots, E$ are real analytic in
$\cU$. Let $\gamma(u)=(\alfa(u),\beta(u))$ be a real analytic,
$2\pi$-periodic curve such that $(0,0,0,\gamma(\R))\subset \cU$.

Then, there exists a real analytic map $\psi
:\widehat{\Gamma_R}\flecha \R^3$ such that:
 \begin{enumerate}
   \item
$\psi(u,0)=(0,0,0)$ for every $u\in \R$.
 \item
There exists a real analytic map $(p,q):\widehat{\Gamma_R}\flecha
\R^2$ such that $(p,q)(u,0)=\gamma(u)$ for every $u\in\R$ and
$(\psi,p,q)(\Gamma_R)\subset \cU$. Moreover, the map
$N(u,v):\widehat{\Gamma_R}\flecha \S^2$  defined by
$$N(u,v)=\frac{(-p,-q,1)}{\sqrt{1+p^2+q^2}}(u,v)$$
satisfies that $\esiz \psi_u,N\esde =\esiz \psi_v,N\esde =0$ in
$\Gamma_R$.
 \item
Assume that the map $(x(u,v),y(u,v))$ is an orientation preserving
local diffeomorphism at some point $(u_0,v_0)\in \Gamma_R$. Then,
the image of $\psi$ around that point is the graph $G\subset \R^3$
of some real analytic solution $z=z(x,y)$ to \eqref{eq1} for the
coefficients $A,\ldots ,E$ such that $C+z_{xx}>0$.
 \item
If $\gamma(u)$ is a regular, negatively oriented, strictly convex
parametrized Jordan curve (so, both $-||\gamma'(u)||$ and the
curvature of $\gamma(u)$ are strictly negative for every $u$), then
for $R>0$ small enough, $\psi(\Gamma_R)$ is the graph of a solution
$z$ to \eqref{eq1} for the coefficients $A,\dots, E$ which is
defined on a punctured neighborhood around the origin, and has an
isolated singularity at the puncture. Moreover, the limit gradient
of this solution is the curve $\gamma=\gamma(\R)$, and $C+z_{xx}>0$.

\end{enumerate}
\end{teorema}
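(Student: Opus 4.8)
The plan is to reduce the general Monge--Amp\`ere type equation \eqref{eq1} to the same first-order system \eqref{sfacil2p} that governed the pure equation \eqref{purema}, and then to run a Cauchy--Kovalevskaya argument along the curve $\gamma$. First I would rewrite \eqref{eq1} in the factored form $(A+z_{yy})(C+z_{xx})-(B-z_{xy})^2=\cD>0$, so that the symmetric matrix $D^2 z+\cA$ is positive definite wherever $z$ is a solution of interest; this means the auxiliary metric $ds^2=(C+z_{xx})\,dx^2+2(z_{xy}-B)\,dx\,dy+(A+z_{yy})\,dy^2$ is Riemannian, and it is conformally equivalent to the second fundamental form of the graph $G$ of $z$ in $\R^3$ (with $z$ solving \eqref{eq1}, the graph has Gaussian curvature proportional to $\cD/(1+|Dz|^2)^2>0$). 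The key observation is that, just as in Section~2, passing to conformal parameters $w=u+iv$ for this metric produces a Beltrami-type system whose unknowns $(x,y,z,p,q)$ --- with $p=z_x$, $q=z_y$ --- satisfy a first-order system $\mathbf{z}_v=M(\mathbf{z})\,\mathbf{z}_u$ with $M$ real analytic in a neighborhood of $(0,0,0,\gamma(\R))$ in $\cU$. I expect $M$ to have the same structural shape as in \eqref{sfacil2p} but with the entries built from $A,\dots,E$ and their first derivatives rather than just $\varphi$; the positivity $C+z_{xx}>0$ and $\cD>0$ guarantee this system is well-defined and real analytic along the initial data.

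Next I would invoke the Cauchy--Kovalevskaya theorem for the system $\mathbf{z}_v=M(\mathbf{z})\mathbf{z}_u$ with the real analytic, $2\pi$-periodic initial condition $\mathbf{z}(u,0)=(0,0,0,\alpha(u),\beta(u))$. Because the data and $M$ are real analytic and the initial surface $v=0$ is non-characteristic for this first-order evolution system (the $v$-derivatives appear solvably), Cauchy--Kovalevskaya yields a unique real analytic solution $\mathbf{z}(u,v)=(x,y,z,p,q)(u,v)$ on a strip $\widehat{\Sigma_R}=\{|{\rm Im}\,w|<R\}$ for some $R>0$; $2\pi$-periodicity in $u$ is preserved by uniqueness, so it descends to $\widehat{\Gamma_R}$. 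Setting $\psi=(x,y,z)$ and $N=(-p,-q,1)/\sqrt{1+p^2+q^2}$ gives items (1) and (2): the identities $\langle\psi_u,N\rangle=\langle\psi_v,N\rangle=0$ should follow by differentiating along the system and noting that the combinations $z_u-px_u-qy_u$ and $z_v-px_v-qy_v$ vanish on $v=0$ and satisfy a linear homogeneous ODE in $v$ (forced by the structure of $M$), hence vanish identically on $\Gamma_R$.

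For item (3) I would argue that wherever $(x(u,v),y(u,v))$ is an orientation-preserving local diffeomorphism, we may locally write $z$ as a function of $(x,y)$; the relations $\langle\psi_u,N\rangle=\langle\psi_v,N\rangle=0$ force $p=z_x$, $q=z_y$, and the first-order system then encodes precisely that $D^2 z+\cA$ is proportional (via the conformal factor) to the positive-definite form built into the construction, which after chasing the algebra is equivalent to \eqref{eq1} together with $C+z_{xx}>0$; analyticity of $z$ in $(x,y)$ follows from analyticity of $\mathbf{z}$ and of the inverse of the local diffeomorphism. For item (4), suppose $\gamma$ is a regular, negatively oriented, strictly convex Jordan curve. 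Then $\gamma'(u)\ne(0,0)$, and I would show that on $v=0$ the Jacobian $x_u y_v-x_v y_u$ can be computed from the system and is controlled by the convexity and negative orientation of $\gamma$ --- this is the geometric heart of the matter and exactly mirrors the computations of Claims~\ref{gammareg} and \ref{gammareg2} in reverse: negative geodesic curvature of $N(u,0)$ on $\S^2$, which is equivalent to $\alpha''\beta'-\alpha'\beta''>0$, forces the conformal factor to be positive and the map $(x,y)$ to be an orientation-preserving diffeomorphism from a collar $\{0<v<\ep\}$ (and by symmetry a two-sheeted one from $\{-\ep<v<\ep\}\setminus\{v=0\}$ collapsing to the origin). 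Thus for $R$ small, $(x,y)$ maps $\Gamma_R$ diffeomorphically onto a punctured neighborhood of the origin, $\psi(\Gamma_R)$ is the graph of a solution $z$ by item (3), the puncture is a genuine (non-removable) isolated singularity because $\psi(u,0)\equiv(0,0,0)$ while $\gamma(u)$ is a non-constant curve, and the limit gradient is $(p,q)(u,0)=\gamma(u)$, i.e.\ $\gamma=\gamma(\R)$.

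The main obstacle I anticipate is item (4): transferring the positivity of the conformal factor and the diffeomorphism property from the boundary $v=0$ into an actual collar requires showing that $x_u y_v - x_v y_u>0$ on $v=0$ --- a computation that must extract the right sign from $-\|\gamma'\|<0$ and $\alpha''\beta'-\alpha'\beta''>0$ through the structure of $M$ --- and then an open/continuity argument to propagate it; establishing that the two sheets $v>0$ and $v<0$ fit together to give a single-valued graph with an honest isolated singularity (rather than a multivalued solution) is precisely where the convexity and regularity of $\gamma$ are used, and is the reason Theorem~\ref{main} fails for \eqref{eq0} in general (Remark~\ref{remaruno}).
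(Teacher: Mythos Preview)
Your overall strategy---solve the first-order system \eqref{sfacil2} by Cauchy--Kovalevskaya from the initial data $(0,0,0,\alpha,\beta)$, then read off items (1)--(4)---is exactly the paper's approach, and your treatment of items (1)--(3) is essentially correct. Two small corrections: first, for the general equation \eqref{eq1} the metric $ds^2=(C+z_{xx})dx^2+2(z_{xy}-B)dxdy+(A+z_{yy})dy^2$ is \emph{not} conformally equivalent to the second fundamental form of the graph (that is only true when $A=B=C=0$); it is simply the positive-definite form $D^2z+\cA$, and conformal parameters for it still exist. Second, the Jacobian $J=x_uy_v-x_vy_u$ vanishes identically on $v=0$ (since $x(u,0)=y(u,0)=0$); what you need is $J_v(u,0)>0$, which the paper computes directly from the system as $J_v(u,0)=\cD^{-1}(\alpha''\beta'-\alpha'\beta'')>0$.

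The genuine gap is in item (4). You correctly observe that $J>0$ on a collar $\{0<v<R\}$ makes $(x,y)$ an orientation-preserving \emph{local} diffeomorphism there, and you correctly flag the passage to a \emph{global} diffeomorphism onto a punctured disc as the main obstacle---but you do not supply the argument, and it does not follow from an ``open/continuity'' step alone. (Also, there is no issue of two sheets fitting together: only $\Gamma_R=\{0<v<R\}$ is used.) The paper's device is this: choose constants $a,c$ large enough that $a-A>0$, $c-C>0$, $(a-A)(c-C)-B^2>0$ on a neighborhood of the initial data, and set $z^*=z+\tfrac{c}{2}x^2+\tfrac{a}{2}y^2$. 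Then the Hessian of $z^*$ is the sum of two positive-definite matrices, so $\psi^*=(x,y,z^*)$ is strictly convex on $\Gamma_R$. Its Legendre transform $\cL$ is again a locally strictly convex surface with $\cL(u,0)=(\alpha(u),\beta(u),0)$ and unit normal $\mathcal{N}_{\cL}=(-x,-y,1)/\sqrt{1+x^2+y^2}$. A rotation-index argument on the horizontal sections $\{z=\varepsilon\}$ of $\cL$ (they inherit rotation index one from the strictly convex Jordan curve $\gamma$) shows these sections are embedded convex curves, hence $\mathcal{N}_{\cL}$ is a global diffeomorphism onto an annular region of $\S^2_+$. By the explicit form of $\mathcal{N}_{\cL}$ this means $(x,y):\Gamma_R\to\R^2$ is injective, and its image is a punctured neighborhood of the origin. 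This is the step your proposal is missing.
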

\begin{observacion}
The first three items of Theorem \ref{mainth11} prove that, if we
start from a $2\pi$-periodic, real analytic curve $\gamma(u)$ in
$\R^2$, we can construct from $\gamma(u)$ a multivalued solution to
\eqref{eq1} with a singularity at the origin. Here by a
\emph{multivalued solution} we mean a surface such that whenever it
is transverse to the vertical direction around one point, it is a
local solution to \eqref{eq1} around this point. If $\gamma(u)$ is
regular and strictly locally convex but non-embedded, the
singularity at the origin of the corresponding multivalued solution
is isolated. See \cite{CaLi} for a different study of multivalued
solutions to Monge-Ampère equations.
\end{observacion}
We also observe that Theorem \ref{mainth11} implies the following
corollary.
\begin{corolario}\label{cormain11}
Assume that the coefficients $A,\dots, E$ are real analytic in
$\cU\subset \R^5$. Let $\gamma\subset \R^2$ be a real analytic,
regular, strictly convex Jordan curve such that
$(0,0,0,\gamma(\R))\subset \cU$. Then there exists a solution $z$ to
\eqref{eq1} for these coefficients that has an isolated singularity
at the origin, and whose limit gradient at the singularity is
$\gamma$.

\end{corolario}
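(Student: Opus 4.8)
The plan is to deduce Corollary \ref{cormain11} directly from Theorem \ref{mainth11}, essentially for free. Given a real analytic, regular, strictly convex Jordan curve $\gamma\subset\R^2$ with $(0,0,0,\gamma(\R))\subset\cU$, the first step is to produce from $\gamma$ a parametrized curve $\gamma(u)=(\alfa(u),\beta(u))$ satisfying the hypotheses of item (4) of Theorem \ref{mainth11}, namely a real analytic, $2\pi$-periodic, regular parametrization that is negatively oriented and strictly locally convex (so both $-\|\gamma'(u)\|$ and the signed curvature of $\gamma(u)$ are strictly negative everywhere). Since $\gamma$ is a regular analytic strictly convex Jordan curve, it admits an analytic regular periodic parametrization; rescaling the parameter we may take the period to be $2\pi$. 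Strict convexity means the curvature has constant sign along the curve, and by reversing the orientation if necessary (i.e. replacing $u$ by $-u$) we may assume the parametrization is negatively oriented with strictly negative curvature. Thus $\gamma(u)$ meets all the requirements in item (4).

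Next I would simply invoke Theorem \ref{mainth11}, item (4), applied to this $\gamma(u)$. It provides, for $R>0$ small enough, a real analytic map $\psi:\widehat{\Gamma_R}\to\R^3$ whose restriction to $\Gamma_R$ is the graph of a solution $z$ to \eqref{eq1} for the coefficients $A,\dots,E$, defined on a punctured neighborhood of the origin, with an isolated singularity at the puncture, with $C+z_{xx}>0$, and whose limit gradient at the singularity is exactly $\gamma=\gamma(\R)$. Since \eqref{eq1} is just a rewriting of \eqref{eq0}, this $z$ is the desired solution, and the corollary follows.

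The only point requiring a word of care — and the closest thing to an obstacle, though it is minor — is checking that the notion of "strictly convex Jordan curve" in the statement of the corollary matches the sign conventions of item (4) of the theorem, and that the orientation reversal used to achieve "negatively oriented" does not affect the other hypotheses. Reversing orientation changes the sign of $\gamma'(u)$ and of the curvature simultaneously, so if one orientation gives positive curvature the other gives negative curvature, and in either case $-\|\gamma'(u)\|<0$ automatically since the parametrization is regular. Hence exactly one of the two orientations satisfies the hypotheses of item (4), and we pick that one. Everything else is a direct citation of Theorem \ref{mainth11}, so the proof is short.

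\begin{proof}
Since $\gamma\subset\R^2$ is a regular, analytic, strictly convex Jordan curve, it admits a regular, real analytic, $2\pi$-periodic parametrization $\gamma(u)=(\alfa(u),\beta(u))$, whose signed curvature has constant sign; after reversing the orientation of the parametrization if necessary (replacing $u$ by $-u$), we may assume that $\gamma(u)$ is negatively oriented and strictly locally convex, i.e. both $-\|\gamma'(u)\|$ and the curvature of $\gamma(u)$ are strictly negative for every $u$. By hypothesis $(0,0,0,\gamma(\R))\subset\cU$, so we may apply item (4) of Theorem \ref{mainth11} to $\gamma(u)$. It yields, for $R>0$ small enough, a real analytic map $\psi:\widehat{\Gamma_R}\to\R^3$ such that $\psi(\Gamma_R)$ is the graph of a solution $z$ to \eqref{eq1} for the coefficients $A,\dots,E$, defined on a punctured neighborhood of the origin, with an isolated singularity at the puncture, with $C+z_{xx}>0$, and whose limit gradient at the singularity is $\gamma=\gamma(\R)$. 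As \eqref{eq1} is merely a rewriting of \eqref{eq0}, this $z$ is a solution of \eqref{eq1} with the stated properties, which proves the corollary.
\end{proof}
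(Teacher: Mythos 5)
Your argument is correct and is exactly how the paper intends the corollary to be read: the paper simply remarks that Corollary \ref{cormain11} follows from Theorem \ref{mainth11}, and your proof makes explicit the routine step of choosing a negatively oriented $2\pi$-periodic analytic parametrization before invoking item (4). No discrepancy in approach or content.
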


Before proving Theorem \ref{mainth11}, let us make some comments
about solutions to the general equation \eqref{eq1}. Let $z$ be a
solution to \eqref{eq1} on some domain $W\subset\R^2$, where
$A,\dots, E$ are of class $C^2$, so that $z$ is of class
$C^{3,\alfa}$ on compact sets of $W$. By the ellipticity condition
\eqref{elliptic}, \beq\label{metrica}
ds^2=(z_{xx}+C)dx^2+2(z_{xy}-B)dxdy+(z_{yy}+A)dy^2 \eeq is a
Riemannian metric on $W$. Then, $(W,ds^2)$ admits in a neighborhood
of each point of $W$ conformal parameters $w:=u+iv$ of class $C^2$
such that (cf. \cite{HeB})

\beq\label{conforme}ds^2=\frac{\sqrt{\mathcal{D}}}{u_xv_y-u_yv_x}|dw|^2.
\eeq where $(x,y)$ satisfy $x_uy_v-x_vy_u>0$. From \cite{Bey1} we
have the equations,
 \beq \label{d1}\begin{array}{l}
  p_u=\sqrt{ \mathcal{D} }y_v+ B y_u- C x_u,\\
  p_v=-\sqrt{\mathcal{D}}y_u+ B y_v- C x_v,\\
  q_u=-\sqrt{ \mathcal{D} }x_v+ B x_u- A y_u,\\
  q _v=\sqrt{ \mathcal{D} }x_u+ B x_v- A y_v,\\
  \end{array} \eeq
from where, since

  $$\def\arraystretch{1.5}\begin{array}
 {ll}z_v&=px_v+qy_v\\
 &=-\displaystyle\frac{p}{\sqrt{\mathcal{D}}}(q_u-Bx_u+Ay_u)+\frac{q}{\sqrt{\mathcal{D}}}(p_u-By_u+Cx_u)\\
 &=\displaystyle \frac{1}{\sqrt{\mathcal{D}}}(x_u(B p+C q)-y_u(A p+B q)+q p_u-p
 q_u),
 \end{array}$$
we arrive at the the following system which generalizes
\eqref{sfacil2p}:

  \begin{equation}\label{sfacil2}
  \left(\begin{array}
  {l}x\\y\\z\\p\\q
  \end{array}\right)_v=\tilde{M}\left(\begin{array}
  {l}x\\y\\z\\p\\q
  \end{array}\right)_u,\qquad \tilde{M}=
  \frac{1}{\sqrt{\mathcal{D}}}\left(\begin{array}
  {ccccc}B&-A&0&0&-1\\
  C&-B&0&1&0\\
  Bp+Cq&-Ap-B q&0&q&-p\\
  0&-E&0&B&C\\
  E&0&0&-A&-B
  \end{array}\right).
  \end{equation}

\vspace{0.2cm}

\emph{Proof of Theorem \ref{mainth11}:}

Let $\gamma(u)=(\alfa(u),\beta(u))$ be a real analytic,
$2\pi$-periodic curve in $\R^2$, and assume that $A,\dots, E$ are
real analytic functions on an open set $\cU\subset \R^5$ that
contains $(0,0,0,\gamma(\R))$, and that satisfy the ellipticity
condition \eqref{elliptic}. Let us consider the  $2\pi$-periodic
initial data $(0,0,0,\alpha(u),\beta(u))$ along the axis $v=0$ in
the $(u,v)$-plane for the system \eqref{sfacil2}. By the
Cauchy-Kowalevsky theorem, there exists a unique real analytic
solution $(x,y,z,p,q)$ to \eqref{sfacil2}, defined on a neighborhood
$\hat{\Sigma_R}=\{(u,v): -R<v<R\}$ of the axis $v=0$, such that
\begin{equation}\label{incondi}(x,y,z,p,q)(u,0)= (0,0,0,\alpha(u),\beta(u)).\end{equation} Observe that
$\Psi:=(x,y,z,p,q): \hat{\Sigma_R}\flecha \R^5$ is $2\pi$-periodic
with respect to $u$, i.e. it is well defined on the quotient
$\hat{\Gamma_R}:=\hat{\Sigma_R}/(2\pi \Z)$.

A computation from \eqref{sfacil2} proves the relation

\beq\label{integfacil}p_vx_u+q_vy_u=p_ux_v+q_uy_v,\eeq which is the
integrability condition needed for the existence of some smooth
function $z_0$ on $\widehat{\Sigma_R}$, unique up to an additive
constant, such that
$$(z_0)_u= p x_u + q y_u,\hspace{1cm} (z_0)_v= p x_v+q y_v.$$
If follows from \eqref{sfacil2} that $(z_0)_v=z_v$ and so
$z(u,v)=z_0(u,v)+f(u)$ for some $f\in C^{\omega} (\Gamma_R\cup \R)$.
Also, observe that \eqref{incondi} implies that $z(u,0)\equiv 0$ and
$(z_0)_u (u,0)\equiv 0$. Thus, $f(u)$ must be constant, and as $z_0$
was defined up to additive constants we may assume that
$z(u,v)=z_0(u,v)$. In particular, it holds \beq\label{zfacil}
 z_u=px_u+qy_u, \qquad
 z_v=px_v+qy_v.
 \eeq
Defining now
\begin{equation}\label{mapasi}
\psi(u,v):=(x(u,v),y(u,v),z(u,v)):\hat{\Gamma_R}\flecha \R^3
 \end{equation}
and
 \begin{equation}\label{mapan}
N(u,v):=\frac{(-p(u,v),-q(u,v),1)}{\sqrt{1+p(u,v)^2
+q(u,v)^2}}:\hat{\Gamma_R}\flecha \S^2
 \end{equation} we see that the first two
items of Theorem \ref{mainth11} hold.

To prove item 3, suppose now that the map $(x(u,v),y(u,v))$ is an
orientation preserving local diffeomorphism at some point
$(u_0,v_0)\in \Sigma_R$, i.e. the condition

\beq\label{det}J:=x_uy_v-x_vy_u>0 \eeq holds at this point. Thus,
around $(u_0,v_0)$ the image of the map $\psi(u,v)$ is the graph $G$
in $\R^3$ of a real analytic function $z=z(x,y)$, and from formula
\eqref{zfacil} the relations $z_x=p$ and $z_y=q$ hold. We prove next
that $z(x,y)$ is a solution to \eqref{eq1} for the coefficients
$A,\dots, E$ we started with.

If we denote $r=z_{xx}$, $s=z_{xy}$, $t=z_{yy}$, then using
\eqref{d1} and working in terms of the $(u,v)$ coordinates we obtain
\begin{equation}\label{fin11}\sqrt{\mathcal{D}} y_v = p_u - B y_u + C x_u = (C+r) x_u -(B-s)
y_u,\end{equation} and working similarly,
\begin{equation}\label{fin12}\def\arraystretch{1.3}\begin{array}{lll} \sqrt{\mathcal{D}} y_u &=&
-(C+r) x_v +
(B-s) y_v, \\ \sqrt{\mathcal{D}} x_v &=& (B-s) x_u - (A+t) y_u, \\
\sqrt{\mathcal{D}} x_u &= &-(B-s) x_v + (A+t)
y_v.\end{array}\end{equation} After the change of coordinates
$(u,v)\mapsto (x,y)$, these expressions yield

\beq\label{e5}\def\arraystretch{1.8}\begin{array}{cc}
u_x=\displaystyle\frac{(C+r)v_y+(B-s)v_x}{\sqrt{\mathcal{D}}},&
v_x=\displaystyle\frac{-(C+r)u_y-(B-s)u_x}{\sqrt{\mathcal{D}}},\\
u_y=\displaystyle\frac{-(B-s) v_y -(A+t) v_x}{\sqrt{\mathcal{D}}}, &
v_y=\displaystyle\frac{ (B-s) u_y +(A+t)
u_x}{\sqrt{\mathcal{D}}}.\end{array}\eeq We deduce then from  the
second and fourth equation in \eqref{e5}    that the system
\beq\label{m1} \left(
                                                                                \begin{array}{c}
                                                                                  v_x \\
                                                                                  v_y \\
                                                                                \end{array}
                                                                              \right)=\mathfrak{M}_1 \left(
                                                                                \begin{array}{c}
                                                                                  u_x \\
                                                                                  u_y \\
                                                                                \end{array}
                                                                              \right)
\eeq holds, where
$$\mathfrak{M}_1=\frac{1}{\sqrt{\mathcal{D}}}\left(\begin{array}{cc}-(B-s) &
-(C+r)\\A+t & B-s
 \end{array}\right).$$ Similarly, from   the first and the third equation in \eqref{e5} we get \beq \label{m2}\left(
                                                                                \begin{array}{c}
                                                                                  u_x \\
                                                                                 u_y \\
                                                                                \end{array}
                                                                              \right)=\mathfrak{M}_2 \left(
                                                                                \begin{array}{c}
                                                                                  v_x \\
                                                                                  v_y \\
                                                                                \end{array}
                                                                              \right),
 \eeq  where $$\mathfrak{M}_2=\frac{1}{\sqrt{\mathcal{D}}}\left(\begin{array}{cc}B-s &  C+r\\-(A+t) & -(B-s)
 \end{array}\right).$$ Clearly, $\mathfrak{M}_1$ is proportional to
 $\mathfrak{M}_2^{-1}$, i.e. $\mathfrak{M}_1\mathfrak{M}_2 =
 \landa(x,y) {\rm Id}$ for some function $\landa$. Hence, from \eqref{m1} and \eqref{m2} we
 obtain $\landa=1$, i.e.
$\mathfrak{M}_1\mathfrak{M}_2={\rm Id}$, and so
$$(A+t)(C+r)-(B-s)^2=\mathcal{D}.$$ That is, $z(x,y)$ is a solution to \eqref{eq1}, as we wanted to
show. Besides, a computation from \eqref{fin11} and \eqref{fin12}
shows that $$C+r= \frac{\sqrt{\mathcal{D}} (y_u^2+y_v^2)}{x_u y_v
-x_v y_u},$$ which is positive by \eqref{det}. This completes the
proof of item (3).

To prove item (4), assume that $\gamma(u)=(\alfa(u),\beta(u))$ is
also regular, embedded, negatively oriented and strictly convex,
i.e. $\alfa''(u) \beta'(u)-\beta''(u) \alfa'(u) >0$ for every $u$.
If we let $J$ be the function in \eqref{det}, then $J(u,0)=0$ for
every $u$, and a computation from \eqref{d1} at $(u,0)$ yields

 \begin{equation}\label{wderivada}
\def\arraystretch{2}\begin{array}{ll}J_v(u,0)&=(x_{uv}y_v-x_vy_{uv})(u,0)\\
&=\displaystyle\left(\left(-\frac{\beta'}{\sqrt{\mathcal{D}}} \right)_u\frac{\alpha'}{\sqrt{\mathcal{D}}}+\frac{\beta'}{\sqrt{\mathcal{D}}}\left(\frac{\alpha'}{\sqrt{\mathcal{D}}}\right)_u\right)(u,0)\\
&=\displaystyle\frac{-1}{\mathcal{D}^{3/2}}\left((\beta''\sqrt{\mathcal{D}}-\beta'(\sqrt{\mathcal{D}})_u)\alpha'-
\beta'(\alpha''\sqrt{\mathcal{D}}-\alpha'(\sqrt{\mathcal{D}})_u)\right)(u,0)\\
&=\displaystyle\frac{1}{\mathcal{D}}(\beta'(u)\alpha''(u)-\beta''(u)\alpha'(u))>0.\end{array}
 \end{equation}

Consequently, since $J$ is $2\pi$-periodic, there is some $R>0$ such
that $J>0$ on $\Gamma_R$. In particular, the map
$\psi(u,v):\hat{\Gamma_R}\flecha \R^3$ given by \eqref{mapasi}
satisfies:

\begin{enumerate}
  \item
The projection $(x(u,v),y(u,v)):\Gamma_R\flecha \R^2$ is an
orientation preserving local diffeomorphism.
 \item
$\psi(u,0)=0$ for every $u\in \R$.
 \item
The upwards-pointing unit normal $N:\Gamma_R\flecha \S_+^2$ of
$\psi$ restricted to $\Gamma_R$ extends analytically to
$\hat{\Gamma_R}$, and \eqref{mapan} holds.
 \item
For every point $(u_0,v_0)\in \Gamma_R$ there exists some $\delta>0$
such that the restriction of $\psi$ to the disk of radius $\delta$
centered at $(u_0,v_0)$ is a graph $z=z(x,y)$ which satisfies
\eqref{eq1} and $C+z_{xx}>0$.
\end{enumerate}

We need to prove now that for $R>0$ small enough, $\psi(\Gamma_R)$
is a graph of a function $z=z(x,y)$ over a punctured disc
$\Omega\subset \R^2$.

Since the set of points $(0,0,0,\gamma(u))$, $u\in\R$, is a compact
set contained in $\cU$, and $A,B,C$ are continuous functions in
$\cU$, there exist sufficiently large real constants $a,c$ such that
the inequalities
$$
a-A>0,\qquad c-C>0,\qquad (c-C)(a-A)-B^2>0,
$$
are satisfied in $\hat{\Gamma_{R'}}$, for a certain positive real
number $R'\leq R$. With no loss of generality we will assume $R=R'$.

Using item (4) above we can view $\psi(\Gamma_R)$ locally as a graph
$z=z(x,y)$ around any point $(u_0,v_0)\in \Gamma_R$, in such a way
that the expression $ds^2$ given by (\ref{metrica}) defines a
Riemannian metric around $(u_0,v_0)$. Hence, we obtain that the
matrix
$$
\left(
\begin{matrix}
r+c&s\\
s&t+a
\end{matrix}
\right)= \left(
\begin{matrix}
r+C&s-B\\
s-B&t+A
\end{matrix}
\right)+ \left(
\begin{matrix}
c-C&B\\
B&a-A
\end{matrix}
\right)
$$
is positive definite around $(u_0,v_0)$ because it is the sum of two
positive definite matrices. As $(u_0,v_0)\in \Gamma_R$ is arbitrary,
this means that the map $\psi^*:\hat{\Gamma_R}\flecha \R^3$ given by
$$\psi^*(u,v)= \left(x(u,v),y(u,v),z(u,v)+\frac{c}{2}
x(u,v)^2+\frac{a}{2} y(u,v)^2\right)$$ is a regular, strictly convex
surface in $\R^3$ when restricted to $\Gamma_R$ because
$$
z^*_{xx}z^*_{yy}-{z^*_{xy}}^2=(r+c)(t+a)-s^2>0,
$$
where $z^*=z+\frac{c}{2} x^2+\frac{a}{2} y^2$.

Also, $\psi^*(u,0)=0$ for every $u$, and the projection of
$\psi^*|_{\Gamma_R}$ into $\R^2$ is a local diffeomorphism. The unit
normal of $\psi^*$ in $\Gamma_R$ is
$$N^*(u,v)=\frac{1}{\sqrt{1+(p+cx)^2+(q+ay)^2}}(-p-cx,-q-ay,1),$$ where
$x,y,p,q$ are evaluated at $(u,v)$. We remark that
\beq\label{normal} N^*(u,0)=N(u,0)= \frac{(-\alfa(u),-\beta(u),
1)}{\sqrt{1+\alfa(u)^2+\beta(u)^2}},\eeq which is a regular,
strictly convex Jordan curve in the upper hemisphere of $\S^2$.

Consider now the analytic \emph{Legendre transform} of
$\psi^*(u,v)$, given by (see \cite[p. 89]{LSZ})
$$
\cL (u,v)= \left(-\frac{N_1^*}{N_3^*},-\frac{N_2^*}{N_3^*},-x
\frac{N_1^*}{N_3^*} -y \frac{N_2^*}{N_3^*} -
z^*\right):\hat{\Gamma_R}\flecha \R^3,
$$
where we are denoting $N^*=(N_1^*,N_2^*,N_3^*)$. It is well known
that, since $\psi^* |_{\Gamma_R}$ is a regular, locally strictly
convex surface in $\R^3$ whose projection to the $(x,y)$-plane is a
local diffeomorphism, then so is $\cL |_{\Gamma_R}$. Its
upwards-pointing unit normal is
 \begin{equation}\label{norleg}
\mathcal{N}_{\cL}= \frac{(-x,-y,1)}{\sqrt{1+x^2+y^2}}
:\Gamma_R\flecha \S_+^2,
 \end{equation}
 where $x,y$ are evaluated at $(u,v)$; hence, $\mathcal{N}_{\cL}$ can be analytically extended to $\hat{\Gamma_R}$.

Since $\cL |_{\Gamma_R}$ is locally strictly convex with $\cL(u,0)$
lying on the horizontal plane $z=0$, $\mathcal{N}_{\cL}
(u,0)=(0,0,1)$ and $z_{xx}^*=r+c>r+C>0$, we have that there exists
$R'>0$ small enough such that $\cL(\Gamma_{R'})$ lies on the upper
half-space of $\R^3$. Now, let us see that the intersection of
$\cL(\Gamma_{R'})$ with each plane $z=\varepsilon$, for
$0<\varepsilon\leq \varepsilon_0$ small enough, is a regular convex
Jordan curve in that plane.

Since $\cL(u,0)=(\alfa(u),\beta(u),0)$ is a horizontal regular
curve, we have that $\cL_u(u,0)$ is a non-vanishing tangent
horizontal vector. Thus, from the compactness of the set
$\R/(2\pi\Z)$, there exists $R''>0$ small enough such that the
horizontal projection $\pi_h(u,v)$ of the vector $\cL_u(u,v)$ does
not vanish for $0\leq v<R''$ and, in addition,  $\pi_h$ is not a
normal vector to the horizontal curve $\gamma_{\varepsilon}$  given
by the intersection of $\cL(\Gamma_{R'})$ with the plane
$z=\varepsilon$, for $0\leq\varepsilon\leq \varepsilon_0$ with
$\varepsilon_0$ small enough. In other words, $\pi_h$ is a
continuous function, $\pi_h(u,0)$ agrees with the derivative of the
strictly convex Jordan curve $(\alfa(u),\beta(u),0)$ and $\pi_h$ is
not normal to $\gamma_{\varepsilon}$. Hence, the rotation index of
$\gamma_{\varepsilon}$ is constant for all
$\varepsilon\in[0,\varepsilon_0]$, and so it must be equal to one.
In particular, the locally strictly convex curve
$\gamma_\varepsilon$ must be embedded.

Now, the piece of the surface $\cL(\Gamma_{R'})$ lying between two
of those parallel planes associated to
$0<\varepsilon_1<\varepsilon_2$ is strictly convex, and bounded by
two regular convex Jordan curves, one on each plane. In these
conditions, the unit normal of $\cL(\Gamma_{R'})$ defines a global
diffeomorphism onto some annular domain of $\S_+^2$. Letting
$\varepsilon_1\to 0$ we conclude that there exists some $R>0$ small
enough such that the unit normal \eqref{norleg} to $\cL$ restricted
to $\Gamma_R$ is a diffeomorphism onto a domain of $\S^2$. But now,
in the view of the expression \eqref{norleg}, this means that the
map $(x(u,v),y(u,v))$ restricted to this domain $\Gamma_R$ is a
global diffeomorphism onto its image. Thus, both $\psi(\Gamma_R)$
and $\psi^*(\Gamma_R)$ are graphs of functions $z(x,y)$ and
$z^*(x,y)$ over a punctured disc $\Omega\subset \R^2$.

Observe that by item (3), the function $z(x,y)$ is a solution to
\eqref{eq1} with an isolated singularity at the origin. Moreover, it
is clear from the construction process we have followed that its
limit gradient at the singularity is the curve $\gamma$ we started
with, and $C+z_{xx}>0$. This concludes the proof of item (4) and
Theorem \ref{mainth11}.

\subsection*{The proof of Theorem \ref{main}}

We are now ready to complete the proof of Theorem \ref{main}.

Assume that $\varphi$ is analytic, and consider the map sending each
$z\in \cM_1$ to the pair $(\gamma, \ep)$ given by its limit gradient
at the origin, and by $\ep=0$ if $z_{xx}>0$ and $\ep=1$ if
$z_{xx}<0$. As explained in Section \ref{sec: unicidad}, if $z$ is a
solution to \eqref{purema} with $z_{xx}<0$ and an isolated
singularity at the origin, then $\tilde{z}(x,y):=-z(-x,-y)$ is a
solution to $z_{xx} z_{yy}-z_{xy}^2=\tilde{\varphi}
(x,y,z,z_x,z_y)$, where $\tilde{\varphi}
(x,y,z,p,q)=\varphi(-x,-y,-z,p,q)$, with $\tilde{z}_{xx}>0$ and an
isolated singularity at the origin. Moreover, the limit gradients
$\gamma,\tilde{\gamma}$ of $z$ and $\tilde{z}$ at the origin
coincide, and $\cH=\tilde{\cH}$.

If $z_{xx}>0$, the fact that $\gamma\in \cM_2$ follows by item (1)
of Theorem \ref{mainth22}. If $z_{xx}<0$, the same conclusion holds
since the function $\tilde{z}$ defined above has the same limit
gradient as $z$, and satisfies $\tilde{z}_{xx}>0$. Thus, the map
$z\in \cM_1\mapsto (\gamma,\ep)\in \cM_2\times \Z_2$ is well
defined. This map is also injective by item (3) of Theorem
\ref{mainth22}, arguing again with the function $\tilde{z}$ instead
of $z$ if $z_{xx}<0$.

Finally, let us prove that this map is surjective. Let $\gamma\in
\cM_2$. By Corollary \ref{cormain11} for $A=B=C=0$, $E=\varphi$, we
see that $\gamma\in \cM_2$ is the limit gradient at the origin of
some solution $z\in \cM_1$ to \eqref{purema} for $\varphi$ such that
$z_{xx}>0$. If we apply again Corollary \ref{cormain11}, but this
time for $A=B=C=0$ and $E=\tilde{\varphi}$ with $\tilde{\varphi}$ as
above, and define in terms of the obtained solution $\tilde{z}$ a
new function $z(x,y):=-\tilde{z} (-x,-y)$, we obtain a solution to
\eqref{purema} for $\varphi$ such that $z_{xx}<0$, with an isolated
singularity at the origin and $\gamma$ as its limit gradient at the
singularity. Thus, the map $z\mapsto (\gamma,\ep)$ is surjective.
This proves Theorem \ref{main}.

\begin{nota}\label{remaruno}
Theorem \ref{main} does not hold for the general equation of
Monge-Ampère type \eqref{eq0}, as the next two examples highlight.

 \begin{enumerate}
   \item
The gradient of a solution to \eqref{eq0} can blow up at an isolated
singularity. For example, Figure \ref{figfinal} shows such behavior
on a rotational graph that satisfies a linear Weingarten relation
$aH+bK=c$, where $H,K$ are the mean and Gaussian curvature of the
graph and the constants $a,b,c$ satisfy $a^2 + bc
> 0, b\neq 0$. By the standard formulas of $H,K$ for graphs in
$\R^3$, it follows that such a rotational graph satisfies an
equation of type \eqref{eq0}.
 \item
Even when the limit gradient of a solution to \eqref{eq0} at an
isolated singularity is a convex Jordan curve, it might not be
strictly convex. For example, consider the PDE of type \eqref{eq0}

 \begin{equation}\label{pdecut}
2u_x^2 u_{xy} + u_{xx} u_{yy} -u_{xy}^2 = 1+u_x^4,
 \end{equation}
and the regular curve  $$\gamma(u)= \frac{1}{8} \left(4 \sin (2u),
4\cos(2u) +4\sin (2u) -\cos (4u)\right):\R/(2\pi \Z)\flecha \R^2.$$
The curvature of $\gamma(u)$ is always positive except at $u=0$,
where it is zero. If we now apply the construction procedure
explained in Theorem \ref{mainth11} to \eqref{pdecut} and
$\gamma(u)$, it can be checked that the corresponding function
$J(u,v)$ is positive on a strip $\{0<v<R\}$ for $R$ small enough,
and from there one can prove that it is obtained a solution to
\eqref{pdecut} with an isolated singularity at the origin whose
limit gradient is $\gamma(\R)$.
 \end{enumerate}

\end{nota}

 \begin{figure}[h]\label{figfinal}\begin{center}
 \includegraphics[width=4cm]{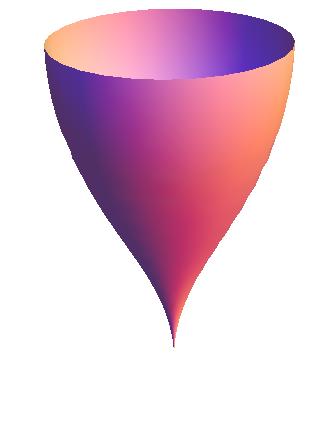}
 \caption{An isolated singularity of a rotational linear Weingarten graph, whose gradient tends to
 infinity at the puncture.} \end{center}
\end{figure}

\section*{Appendix: Isolated singularities of prescribed curvature in $\R^3$}
Let $\psi:\Omega\flecha \R^3$ be an immersion of the punctured disc
$\Omega=\{(x,y)\in\R^2 : 0<x^2+y^2<\rho^2\} $ into $\R^3$ and assume
that $\psi$ extends continuously but not $C^1$-smoothly to the
origin. Following \cite{GaMi}, we say in these conditions that
$\psi$ has an \emph{embedded isolated singularity} at
$p_0=\psi(0)\in\R^3$ if there is a punctured neighborhood $U\subset
\Omega$ of the origin such that $\psi (U)$ is an embedded surface.

In these conditions, assume moreover that $\psi:\Omega\flecha \R^3$
has positive curvature (not necessarily constant) at every point.
Then, we can orient it by choosing the unique unit normal
$N:\Omega\flecha \S^2$ with respect to which the second fundamental
form of $\psi$ is positive definite. We call this orientation the
\emph{canonical orientation} of the surface. It then follows by
Theorem 13 in \cite{GaMi} that $\psi (\Omega)$ can be viewed around
the singularity as a convex graph over a punctured disc in some
direction of $\R^3$. Specifically, there is a punctured neighborhood
$U^*\subset \Omega$ of the origin and an isometry $\Psi$ of $\R^3$
such that if $(x',y',z')=\Psi(x,y,z)$, then $\psi(U^*)$ is a convex
graph $z'=z'(x',y')$ with an isolated singularity at the origin, and
for which the unit normal $N$ associated to its canonical
orientation is $N=(-z'_{x'} \parc_{x'} - z'_{y'} \parc_{y'} +
\parc_{z'})/\sqrt{1+(z'_{x'})^2 +(z'_{y'})^2}$.

Let $\sigma\subset\S^2$ denote the \emph{limit unit normal} of
$\psi$ at the singularity, i.e. the set of points $w_0\in \S^2$ for
which there exist points $q_n\in \Omega$ converging to $(0,0)$ such
that $N(q_n)$ converge to $w_0$. It follows from the previous
discussion that $\sigma$ is explicitly related to the limit gradient
of the surface at the singularity, when we view $\psi(U^*)$ as a
graph $z'=z'(x',y')$ as explained above.

Besides, it is easy to observe that for any direction $v_0\in \S^2$
a curve $\sigma(u)$ in the hemisphere $\S^2\cap\{x\in \R^3: \esiz
x,v_0\esde>0\}$ is regular and strictly convex if and only if so is
the planar curve $\gamma(u)$ contained in the plane
$\{v_0\}^{\perp}\subset \R^3$ given by
$$\gamma (u)= \frac{\esiz \sigma(u),e_1\esde}{\esiz \sigma (u),v_0\esde}
\, e_1 +  \frac{\esiz \sigma (u),e_2\esde} {\esiz \sigma
(u),v_0\esde} \, e_2,$$ where $\{e_1,e_2,v_0\}$ is a positively
oriented orthonormal basis of $\R^3$. Also, recall that any regular
strictly convex Jordan curve in $\S^2$ is contained in some open
hemisphere of $\S^2$.

With all of this, and recalling that the equation for the curvature
$K=K(x,y)$ of a graph $z=z(x,y)$ in $\R^3$ is given by $${\rm det}
(D^2 z)=K (1+|Dz|^2)^2,$$ and is invariant by isometries of $\R^3$,
it is elementary to obtain the following theorem as a corollary of
Theorem \ref{main}.

 \begin{teorema}\label{teo: embedded}
Let $\mathcal{K}:\mathcal{O}\subset\R^3\flecha (0,\8)$ be a positive
real analytic function defined on an open  set $\mathcal{O}\subset
\R^3$ containing a given point $p_0\in\R^3$. Let $\cA_1$ denote the
class of all the canonically oriented surfaces $\Sigma$ in $\R^3$
that have $p_0$ as an embedded isolated singularity, and whose
extrinsic curvature at every point $(x,y,z)\in \Sigma\cap
\mathcal{O}$ is given by $\cK (x,y,z)$; here, we identify
$\Sigma_1,\Sigma_2\in \cA_1$ if they overlap on an open set
containing the singularity $p_0$.

Then, the map that sends each surface in $\cA_1$ to its limit unit
normal at the singularity provides a one-to-one correspondence
between $\cA_1$ and the class $\cA_2$ of regular, analytic, strictly
convex Jordan curves in $\S^2$.
 \end{teorema}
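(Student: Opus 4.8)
The plan is to derive Theorem~\ref{teo: embedded} directly from Theorem~\ref{main}, translating a canonically oriented surface with an embedded isolated singularity into a solution of an elliptic Monge-Amp\`ere equation with an isolated singularity, and conversely. Given $\Sigma\in\cA_1$, I would first apply the discussion preceding the statement: by Theorem~13 in \cite{GaMi} there is an isometry $\Psi$ of $\R^3$ with $\Psi(p_0)=(0,0,0)$ such that a punctured neighborhood of the singularity of $\Psi(\Sigma)$ is a convex graph $z=z(x,y)$ over a punctured disc, with an isolated singularity at the origin, whose canonical orientation is the one with upward unit normal $N=(-z_x,-z_y,1)/\sqrt{1+z_x^2+z_y^2}$. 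Since the second fundamental form of this graph with respect to $N$ equals $\tfrac{1}{\sqrt{1+z_x^2+z_y^2}}\,D^2z$, positivity of $II$ forces $D^2z>0$, hence $z_{xx}>0$; this is exactly why the $\Z_2$ factor of Theorem~\ref{main} does not appear in the present statement. Because the curvature equation $\det(D^2z)=K(1+|Dz|^2)^2$ is invariant under isometries of $\R^3$, $z$ solves $\det(D^2z)=\varphi(x,y,z,Dz)$ with $\varphi(x,y,z,p,q):=(\cK\circ\Psi^{-1})(x,y,z)(1+p^2+q^2)^2$, which is real analytic and positive on an open set $\cU\subset\R^5$; here $\cU=\Psi(\mathcal{O})\times\R^2$ with $(0,0,0)\in\Psi(\mathcal{O})$, so $\cH=\R^2$. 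Thus $z\in\cM_1$ for this equation, with $z_{xx}>0$.

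Next, Theorem~\ref{main} gives that the limit gradient $\gamma\subset\R^2$ of $z$ is a regular, analytic, strictly convex Jordan curve, and that $\gamma$ determines $z$ uniquely among solutions with $z_{xx}>0$. The limit unit normal $\sigma\subset\S^2$ of $\Sigma$ is defined intrinsically, with no reference to $\Psi$, and after applying $\Psi$ it becomes the image of $\gamma$ under the central-projection-type map $\xi\mapsto(-\xi,1)/\sqrt{1+|\xi|^2}$; by the remark recalled before the statement, $\gamma$ is a regular, analytic, strictly convex planar Jordan curve if and only if $\sigma$ is such a curve in the open hemisphere about $d\Psi^{-1}(e_3)$, and every regular, analytic, strictly convex Jordan curve in $\S^2$ lies in some open hemisphere. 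Hence the assignment $\Sigma\mapsto\sigma$ is a well-defined map from $\cA_1$ into $\cA_2$.

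It remains to establish bijectivity, once more by passing through Theorem~\ref{main} with an isometry adapted to $\sigma$. For injectivity, if $\Sigma_1,\Sigma_2\in\cA_1$ have the same limit unit normal $\sigma$, I would choose one open hemisphere about some $v_0\in\S^2$ containing $\sigma$ and a single isometry $\Psi$ with $d\Psi(v_0)=e_3$, $\Psi(p_0)=0$; near the singularity each $\Psi(\Sigma_i)$ is then a graph $z_i=z_i(x,y)$ with $z_i(0,0)=0$, $(z_i)_{xx}>0$, solving the same equation and having the same limit gradient $\gamma$ (the preimage of $d\Psi(\sigma)$ under the projection map above), so by Theorem~\ref{main} the graphs of $z_1$ and $z_2$ agree near the origin, i.e.\ $\Sigma_1$ and $\Sigma_2$ overlap near $p_0$ and are identified in $\cA_1$. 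For surjectivity, given $\sigma\in\cA_2$ I would pick $v_0$ and $\Psi$ as before; then $d\Psi(\sigma)$ corresponds to a regular, analytic, strictly convex planar Jordan curve $\gamma\subset\R^2=\cH$, and Theorem~\ref{main} applied with $\ep=0$ produces $z\in\cM_1$ solving $\det(D^2z)=(\cK\circ\Psi^{-1})(x,y,z)(1+|Dz|^2)^2$ with limit gradient $\gamma$ and $z_{xx}>0$; transporting its graph back by $\Psi^{-1}$ yields $\Sigma\in\cA_1$ with extrinsic curvature $\cK$ and limit unit normal $\sigma$. The points needing the most care are the identification of the canonical orientation with the normalization $z_{xx}>0$, which is what makes the $\Z_2$ ambiguity genuinely vanish here, and the legitimacy of using one common isometry in the injectivity step, which works precisely because the shared limit unit normal lies in a fixed open hemisphere so that both surfaces are simultaneously graphs in the direction $-v_0$; everything else is a routine dictionary between the surface-theoretic and PDE formulations through Theorem~\ref{main} and the remarks preceding the theorem.
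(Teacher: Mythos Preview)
Your proposal is correct and follows essentially the same approach as the paper: the paper presents Theorem~\ref{teo: embedded} as an elementary corollary of Theorem~\ref{main}, invoking exactly the ingredients you use---Theorem~13 in \cite{GaMi} to realize the surface as a convex graph, the correspondence between the limit unit normal and the limit gradient via central projection to a hemisphere, invariance of the curvature equation under isometries, and the observation that the canonical orientation forces $z_{xx}>0$ and thereby absorbs the $\Z_2$ factor. Your write-up simply unpacks the details that the paper leaves implicit after its preparatory discussion.
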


Let us point out that the $\Z_2$ factor appearing in the
correspondence of Theorem \ref{main} does not appear in Theorem
\ref{teo: embedded} by our choice of the \emph{canonical
orientation} for surfaces in $\cA_1$.

Theorem \ref{teo: embedded} generalizes \cite[Corollary 13]{GHM},
which covers the case $\cK= {\rm const.}$

\end{document}